\newcounter{warn}[page]
\newcommand{\danger}{${\color{red}\triangle}\llap{\raisebox{.3ex}%
{\tiny!\hspace{1.45ex}}}$}
\newcommand{\warning}[1]{%
\raisebox{.01em}[0em]{\danger\ifnum\value{warn} > 1%
\tiny\bf\arabic{warn}\fi}%
\marginpar{\color{red}\tiny{\ifnum\value{warn} > 1\tiny\bf\arabic{warn}:\fi}\tiny #1}%
\stepcounter{warn}}
\newtheorem{theorem}{Theorem}[section]
\newtheorem{definition}[theorem]{Definition}
\newtheorem{corollary}[theorem]{Corollary}
\newtheorem{proposition}[theorem]{Proposition}
\newtheorem{lemma}[theorem]{Lemma}
\theoremstyle{remark}
\newtheorem{remark}{Remark}
\renewcommand{\emptyset}{\varnothing}
\newcommand{\Z}{\mathbb{Z}}
\newcommand{\R}{\mathbb{R}}
\newcommand{\C}{\mathbb{C}}
\renewcommand{\S}{\mathbb{S}}
\newcommand{\eval}{\mathrm{ev}}
\newcommand{\Gen}[1][\star]{{\mathcal{L}_{#1}}}
\newcommand{\Loops}{\mathop{\Omega}}
\newcommand{\Orbits}{\mathcal{I}}
\newcommand{\CovOrbits}{\tilde{\Orbits}}
\newcommand{\Crit}{\mathrm{Crit}}
\newcommand{\U}{\mathcal{U}}
\newcommand{\M}{\mathcal{M}}
\newcommand{\glueto}{\overset{\sharp}{\leftrightarrow}}
\newcommand{\omegastd}{\omega_{\mathrm{std}}}
\newcommand{\Id}{\mathrm{Id}}
\author{Jean-Fran\c cois Barraud}
\author{Lara Simone Su\'arez}
\thanks{The second author was supported by the funding from the European Community’s
Seventh Framework Progamme ([FP7/2007-2013] [FP7/2007-2011]) under grant agreement no [258204].}
\email{barraud@math.univ-toulouse.fr}
\email{lara.suarez@mail.huji.ac.il}
\address{Institut de math\'{e}matiques de Toulouse, Universit\'{e} Paul Sabatier – Toulouse
III, 118 route de Narbonne, F-31062 Toulouse Cedex 9, FRANCE}
\address{School of Mathematical Sciences, Tel Aviv University, ISRAEL.}
\begin{document}

\title{The fundamental group of a rigid Lagrangian Cobordism}

\begin{abstract}
 In this article we extend the construction of the Floer fundamental group to the monotone Lagrangian setting and use it to study the fundamental group
 of a Lagrangian cobordism $W\subset (\C\times M, \omega_{st}\oplus
 \omega)$ between two Lagrangian submanifolds $L, L'\subset ( M,
 \omega)$. We show that under natural conditions the inclusions $L,L'\hookrightarrow W$
induce surjective maps $\pi_{1}(L)\twoheadrightarrow\pi_{1}(W)$, $\pi_{1}(L')\twoheadrightarrow\pi_{1}(W)$ and when the previous maps are injective then $W$ is an h-cobordism. 
\end{abstract} 
\maketitle

\section{Introduction}
In \cite{Pi1} the first author recovered the fundamental group of a monotone symplectic manifold $(M, \omega)$ by studying the moduli spaces
of ``augmentations'' (see Definition \ref{def: augmentation}). The first objective of this paper is to extend this construction to the Lagrangian setting~: when no holomorphic disc can arise on the Lagrangian
submanifold, the Lagrangian and Hamiltonian settings turn out to be very similar, and the same ideas allow to recover the fundamental group of a Lagrangian submanifold from Floer theoretical objects.

The second objective of this paper is to study the fundamental group of a Lagrangian cobordism, in particular a monotone or weakly exact cobordism with two ends.

Let $L, L' \subset (M, \omega)$ be two compact, connected Lagrangian submanifolds. Denote by $\tilde{M} = \C \times M$ the symplectic manifold with the two-form $\tilde{\omega}= dx\wedge dy
\oplus \omega$ and denote by $\pi:\tilde{M}\rightarrow \C$ the projection.

\begin{definition}
A Lagrangian cobordism $(W; L, L')$ is a is a non-compact embedded Lagrangian $W\subset \tilde{M}$ such that for some $\epsilon > 0$ we have $\pi^{-1}([\epsilon, 1-\epsilon]\times \R) \cap W = \hat{W}$ is a smooth-compact cobordism between $L$ and $L'$ and $W\setminus \hat{W}= (-\infty,\epsilon)\times \{0\} \times L \bigcup  (1-\epsilon,
  \infty) \times \{0\} \times L'$.
\end{definition}

The relation of Lagrangian cobordism between Lagrangian submanifolds was
introduced by Arnold in \cite{Ar1}, \cite{Ar2}.  Immersed Lagrangian
cobordism was studied in \cite{Eli} and \cite{Aud} independently. Monotone embedded Lagrangian cobordism was studied in \cite{Che} and monotone embedded
Lagrangian cobordism between more than two Lagrangians was studied in \cite{BiCo},
\cite{BiCo1}. The main examples of such cobordisms are given by the
Lagrangian suspension construction \cite{Pol1} and by the trace of
Lagrangian surgery \cite{LaSik},\cite{Pol},\cite{Ha}, \cite{MaWu}.

When a Lagrangian cobordism $(W; L, L')$ is exact or monotone, what we call {\em rigid}, its
topology is very restricted. For example in \cite[Theorem 2.2.2]{BiCo} the authors proved that under natural conditions a monotone Lagrangian cobordism $(W; L, L')$ is a quantum h-cobordism. This means that $QH(W,L;\Z_{2})= 0 = QH(W,L';\Z_{2})$. Moreover, they proved that if $L, L'$ are wide\begin{footnote}{
A Lagrangian $L$ is wide (see \cite[Definition 1.2.1]{BiCo4}) if there exist an isomorphism $QH(L)\cong H_{*}(L; \Z_{2})\otimes \Z_{2}[t^{-1}, t]$, between the quantum homology and the singular homology of $L$ .}
\end{footnote} then the maps in singular homology $H_{1}(L;\Z_{2})\rightarrow H_{1}(W;\Z_{2})$ and $H_{1}(L';\Z_{2})\rightarrow H_{1}(W;\Z_{2})$ induced by the inclusions have the same image.

In this paper we investigate the maps induced by the inclusions
$L,L'\hookrightarrow W$ on the fundamental groups, $\pi_{1}(L)\xrightarrow{i_{\sharp}} \pi_{1}(W)$ and
$\pi_{1}(L')\xrightarrow{i_{\sharp}} \pi_{1}(W)$, and we prove
the following:

\begin{theorem}\label{teo:surjectivity}
  Let $(W;L,L')$ be a weakly exact Lagrangian cobordism or a
  monotone Lagrangian cobordism with $N_{W}> \text{dim}(W)$+1, then the inclusions
  $L, L'\hookrightarrow W$ induce surjective maps
  $\pi_{1}(L)\xrightarrow{i_{\sharp}} \pi_{1}(W)$ and
  $\pi_{1}(L')\xrightarrow{i_{\sharp}} \pi_{1}(W)$ on the fundamental
  groups. 
\end{theorem}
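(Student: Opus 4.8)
\emph{Strategy of proof.} Fix a basepoint $p_0\in L\subset W$. The plan is to compute $\pi_1(W,p_0)$ through the Floer-theoretic presentation of the fundamental group developed in this paper --- generators attached to the index one trajectories of a chosen Morse/Floer datum, relations coming from the index two moduli spaces, and homotopy classes recorded via the augmentation moduli spaces of Definition~\ref{def: augmentation} --- and to arrange the datum so that every generator of this presentation visibly lies in $i_\sharp\pi_1(L,p_0)$.

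First I would install the cobordism version of this package on the non-compact Lagrangian $W$. Choose a generic almost complex structure and an auxiliary Hamiltonian on $\tilde M$ that is linear in the real part of the $\C$-coordinate outside $\pi^{-1}([\epsilon,1-\epsilon]\times\R)$, in the spirit of the cobordism Floer theory of \cite{BiCo}: this confines all generators to $\hat W$ and, together with the projection $\pi$, provides the compactness of all moduli spaces in the $\C$-direction. The standing hypothesis --- weak exactness, or monotonicity with $N_W>\dim W+1$ --- ensures that no non-constant holomorphic disc of Maslov index at most $\dim W+1$ bounds on $W$, so the moduli spaces that enter (all of dimension $\le\dim W$, the augmentation spaces included) are regular after generic perturbation and compactify only by breaking of trajectories, exactly as in the closed monotone situation of \cite{Pi1}. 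Hence the construction applies and the resulting Floer fundamental group of $W$ is canonically identified with $\pi_1(W,p_0)$. I would also choose the datum so that its restriction to the negative end is a Morse function on $L$, realizing $CF(L)$ as the subcomplex of $CF(W)$ generated by the critical points on $L$.

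The crux is a $\pi_1$-enriched, chain-level refinement of the identity $QH(W,L;\Z_2)=0$ of \cite[Theorem 2.2.2]{BiCo}. I would run the Biran--Cornea reduction of $W$ along a curve in $\C$ bending its two ends together, but carry along the homotopy classes of the connecting strips; rather than merely an acyclic quotient this produces an explicit augmentation-type deformation retraction of $CF(W)$ onto $CF(L)$ in the enriched sense. Feeding a loop $\gamma\subset W$ --- written through the presentation as a product of the elementary loops attached to index one generators in $\hat W$ --- into this retraction rewrites $[\gamma]$ as a product of elements of $i_\sharp\pi_1(L,p_0)$, so $i_\sharp\colon\pi_1(L)\to\pi_1(W)$ is onto. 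Applying the same argument to the reflected cobordism $(W;L',L)$ (precompose with a symplectomorphism of $\C$ exchanging the ends) gives surjectivity of $\pi_1(L')\to\pi_1(W)$.

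The step I expect to be the real obstacle is precisely this last upgrade --- from the homological vanishing $QH(W,L)=0$ to its fundamental-group counterpart. It requires defining the augmentation moduli spaces for the cobordism, proving they stay compact (no escape to infinity along $\pi$ and no disc bubbling in the range of indices $\le\dim W+1$, which is what forces the bound on $N_W$), and matching them with the corresponding spaces for $L$ so that the enriched retraction genuinely exists. Once these enriched moduli spaces are in place, the combinatorial step --- that every generator lies in $i_\sharp\pi_1(L)$ --- is routine.
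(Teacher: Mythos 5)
Your proposal and the paper diverge at the decisive step, and the divergence is exactly where you flag the obstacle. The paper does \emph{not} enrich the Biran--Cornea acyclicity argument. Instead it chooses the Hamiltonian perturbation so that the geometric picture collapses: $\Phi^1$ translates the negative end upward, the positive end downward, and pushes the compact part far enough down that all intersections $W\cap W_1$ lie over a \emph{single} point $p\in\C$, located over the negative cylindrical end where the projection $\pi$ is holomorphic. With the basepoint chosen so that $\pi(\star)=p$, three confinement statements then hold: the Floer strips $\M(y,x)$ (this is already in \cite{BiCo}), the augmentations $\M(x,\emptyset)$, and the co-augmentations $\M(\star,x)$ all have image inside the fiber $\pi^{-1}(p)\cong M$. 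The proofs are open-mapping-theorem/orientation arguments: a nonconstant $\pi\circ u$ would have to exit into an unbounded departure region of $\C\setminus\pi(\bigcup_t\Phi^t(W))$, which is impossible. Once everything lives in the fiber, the Floer steps for $(\C\times M, W, W_1)$ are \emph{literally identical} to the Floer steps for $(M,L_-,L_1)$, so the identity on generators gives a tautologically surjective map $\Gen(L_-,L_1)\to\Gen(W,W_1)$, and the commuting evaluation diagram finishes the argument. No retraction, enriched or otherwise, is built.

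By contrast, the step you rely on --- a ``$\pi_1$-enriched deformation retraction of $CF(W)$ onto $CF(L)$'' obtained by bending the ends and tracking homotopy classes --- is a genuine gap, and you are right to suspect it. The Biran--Cornea bending produces a vanishing result for a \emph{pair} complex; turning this into an explicit chain-level retraction that remembers non-abelian $\pi_1$-data (not just local-coefficient $H_0$ or $H_1$) is substantially harder than the homological statement, and the paper deliberately avoids it. What the paper \emph{does} have in this spirit is weaker: Remark 2 records Chantraine's observation that $H_0(L;\Z[\pi_1(W)])\cong\Z$ already forces surjectivity, and the proof of Theorem~\ref{theo:HcovForWeakExactOrMonotone=0} runs the pearl complex with $\Z[\pi_1(W)]$ coefficients --- both are homology with local coefficients, i.e.\ abelianized at the chain level, not a retraction carrying full $\pi_1$-information. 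So your route is a plausible alternative in outline, but it would require proving exactly the enriched retraction you flag as uncertain, whereas the paper's confinement-to-a-fiber argument sidesteps the issue entirely by making the generating loop data for $W$ and for $L_-$ coincide on the nose.
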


\begin{remark}
  One can expect these maps to be also injective, at least in many cases.
  Unfortunately, a better understanding of the relations in the
  fundamental group is still needed to answer this question. 
\end{remark}

\begin{remark}
  Notice that we use the construction of the fundamental group to obtain these
  results, while a somewhat more ``algebraic'' proof was pointed out to us by Baptiste Chantraine \cite{ChDiGhiGo}, using local coefficients Floer homology.  In a few words, if $i_{\sharp}$ is not onto, then in the universal cover $\tilde{W}$ of $W$, the preimage of $L$ is not connected, which is detected by the fact that
  $H_{0}(L,\Z[\pi_{1}(W)])\neq\Z$, but for an exact Lagrangian, the second author proved in \cite{Sua} that $H_{0}(L,\Z[\pi_{1}(W)])\cong H_{0}(W,\Z[\pi_{1}(W)])=\Z$.
  
As a consequence, the construction of the Floer fundamental group is not intrinsically required to prove the surjectivity of these maps.
However, we believe that the construction of the fundamental group in both the Lagrangian and Lagrangian cobordism settings has some interest in itself, as well as this version of the proof~: being slightly more geometric, this proof may also offer an interesting point of view to tackle the injectivity question.
\end{remark}

\medskip

A consequence of Theorem \ref{teo:surjectivity} is:
\begin{corollary}\label{cor:IsoIfPi2ML=0}
  If moreover $\pi_{2}(M,L)=0=\pi_{2}(M,L')$, then the maps
  $\pi_{1}(L)\xrightarrow{i_{\sharp}} \pi_{1}(W)$ and $\pi_{1}(L')\xrightarrow{i_{\sharp}} \pi_{1}(W)$ are isomorphisms. 
\end{corollary}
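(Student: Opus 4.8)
The plan is to reduce the statement to Theorem~\ref{teo:surjectivity} by observing that, once surjectivity is known, the hypothesis $\pi_{2}(M,L)=0$ forces injectivity for purely homotopy-theoretic reasons. More precisely: in the setting of Theorem~\ref{teo:surjectivity} the maps $i_\sharp\colon\pi_{1}(L)\to\pi_{1}(W)$ and $i_\sharp\colon\pi_{1}(L')\to\pi_{1}(W)$ are already surjective, so it suffices to prove that they are injective. By symmetry I treat only $L$; the case of $L'$ is identical, using $\pi_{2}(M,L')=0$ instead.

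The key point is a factorization. Recall that in $W$ the submanifold $L$ appears as a slice $\{c_{0}\}\times\{0\}\times L$ of the negative cylindrical end $(-\infty,\epsilon)\times\{0\}\times L$ of $W$, and that under the projection $p\colon\tilde{M}=\C\times M\to M$ this slice is carried by the identity onto $L\subset M$. Since $\C$ is contractible, $p$ induces an isomorphism $p_\sharp\colon\pi_{1}(\C\times M)\xrightarrow{\ \sim\ }\pi_{1}(M)$. Composing the inclusions $L\hookrightarrow W\hookrightarrow\C\times M$ with $p$ therefore recovers the inclusion $L\subset M$, so on fundamental groups we obtain a factorization
\[
j_\sharp\colon\ \pi_{1}(L)\xrightarrow{\ i_\sharp\ }\pi_{1}(W)\longrightarrow\pi_{1}(\C\times M)\xrightarrow[\ \sim\ ]{\ p_\sharp\ }\pi_{1}(M),
\]
where $j_\sharp$ is the map induced by the Lagrangian embedding $L\subset M$. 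Now the long exact sequence of the pair $(M,L)$ gives exactness of $\pi_{2}(M,L)\to\pi_{1}(L)\xrightarrow{j_\sharp}\pi_{1}(M)$, so $\ker(j_\sharp)$ is the image of $\partial\colon\pi_{2}(M,L)\to\pi_{1}(L)$, which vanishes by hypothesis. Hence $j_\sharp$ is injective, and by the displayed factorization so is $i_\sharp\colon\pi_{1}(L)\to\pi_{1}(W)$. Combined with surjectivity from Theorem~\ref{teo:surjectivity}, this shows $i_\sharp$ is an isomorphism, and likewise for $L'$.

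I do not expect a serious obstacle here: the analytic and Floer-theoretic work is entirely contained in Theorem~\ref{teo:surjectivity}, and what remains is elementary. The only step demanding a moment of care is the identification of the factorization above, i.e.\ checking that the composite of the inclusion $L\hookrightarrow W$ with $W\hookrightarrow\C\times M$ and the projection onto $M$ is the original Lagrangian embedding of $L$; this is immediate from the product form $(-\infty,\epsilon)\times\{0\}\times L$ of the cylindrical end prescribed in the definition of a Lagrangian cobordism.
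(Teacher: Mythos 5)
Your argument coincides with the paper's own proof: both use the long exact sequence of the pair $(M,L)$ together with $\pi_{2}(M,L)=0$ to get injectivity of $\pi_{1}(L)\to\pi_{1}(M)\cong\pi_{1}(\C\times M)$, observe that this map factors through $\pi_{1}(W)$, and combine the resulting injectivity of $i_\sharp$ with the surjectivity from Theorem~\ref{teo:surjectivity}. No discrepancy.
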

In \cite{Sua} the second author studied exact Lagrangian cobordism and
she showed: 
\begin{theorem} \cite[Theorem 18]{Sua}\label{teo:pseudoisotopy}
  Let $(W; L, L')$ be an exact, spin Lagrangian cobordism with
  $\mu\vert_{\pi_{2}(\tilde{M}, W)}=0$. If $L,L' \hookrightarrow
  W$ induce injective morphisms $\pi_{1}(L)\xrightarrow{i_{\sharp}} \pi_{1}(W)$
  and $\pi_{1}(L')\xrightarrow{i_{\sharp}} \pi_{1}(W)$, then $(W; L, L')$ is
  an h-cobordism. Moreover, if $dim(W) \geq 6$ then there is a diffeomorphisms $W\cong \R \times L$.
\end{theorem}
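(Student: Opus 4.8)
The plan is to establish the homological content with Floer homology twisted by the group ring $\Lambda=\Z[\pi_1(W)]$, to deduce the h-cobordism statement from the relative Hurewicz and Whitehead theorems, and finally to upgrade it to a product via the $s$-cobordism theorem.

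First I would reduce to a vanishing statement for relative homology with local coefficients. The cobordism $W$ deformation retracts onto its compact part $\hat W$, collapsing the two cylindrical ends $(-\infty,\epsilon)\times\{0\}\times L$ and $(1-\epsilon,\infty)\times\{0\}\times L'$ onto $\partial_-\hat W=L$ and $\partial_+\hat W=L'$, so the inclusions $L,L'\hookrightarrow W$ are homotopic to the boundary inclusions of $\hat W$. By Theorem~\ref{teo:surjectivity} (an exact cobordism is in particular weakly exact) the maps $i_\sharp$ are surjective, and by hypothesis injective, hence isomorphisms onto $\pi_1(W)$. By the relative Hurewicz theorem applied to the universal covers, together with Whitehead's theorem, it then suffices to prove $H_*(W,L;\Lambda)=0$ and $H_*(W,L';\Lambda)=0$: this forces both $L\hookrightarrow\hat W$ and $L'\hookrightarrow\hat W$ to be homotopy equivalences, i.e. $(\hat W;L,L')$ --- and hence $(W;L,L')$ --- to be an h-cobordism.

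Next I would compute these groups by Floer theory, in the spirit of the quantum h-cobordism of \cite[Theorem~2.2.2]{BiCo} but with $\Lambda$-coefficients. Since $W$ is exact with $\mu|_{\pi_2(\tilde M,W)}=0$ and spin, one can define its self-Floer homology $HF_*(W;\Lambda)$: perturb $W$ to $\varphi(W)$ by a Hamiltonian which near the ends is a small translation in $\C$, so that $W\pitchfork\varphi(W)$ with all intersection points lying over the compact part; exactness excludes disc bubbling, the spin structure orients the moduli spaces over $\Z$, and the coefficients are twisted by the local system over $W$ with fibre $\Lambda$. On the one hand, taking the perturbation $C^2$-small and modelled on a Morse function adapted to the ends, the standard Morse--Floer comparison identifies this complex, up to chain homotopy, with the singular chains of $(W,\partial_-W)=(W,L)$ with $\Lambda$-coefficients, so $HF_*(W;\Lambda)\cong H_*(W,L;\Lambda)$; reversing the direction of the perturbation yields $H_*(W,L';\Lambda)$ instead. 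On the other hand, the ends of $W$ are horizontal rays at height $y=0$ while $\hat W$ is compact, so for $c$ large the vertical translation $(z,m)\mapsto(z+ic,m)$, cut off in the $y$-variable away from $W$, is a Hamiltonian isotopy $\psi$ with $\psi(W)\cap W=\varnothing$; Hamiltonian invariance of Floer homology (carrying the local system along, as $\psi\simeq\Id$) then gives $HF_*(W;\Lambda)\cong HF_*(W,\psi(W);\Lambda)=0$ for lack of generators. Hence $H_*(W,L;\Lambda)=0=H_*(W,L';\Lambda)$, which proves the h-cobordism assertion.

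For the last statement I would invoke the $s$-cobordism theorem: when $\dim W\geq6$, the h-cobordism $(\hat W;L,L')$ is diffeomorphic rel boundary to $L\times[0,1]$ --- whence $W\cong\R\times L$ --- exactly when the Whitehead torsion $\tau(\hat W,L)\in Wh(\pi_1 W)$ vanishes. Refining the Morse--Floer comparison of the previous step to a \emph{simple} homotopy equivalence, the finitely generated free based acyclic $\Lambda$-complex $CF_*(W;\Lambda)$ has torsion equal to $\tau(\hat W,L)$; and this torsion is trivial because the acyclicity of $CF_*(W;\Lambda)$ is realised by the continuation map $CF_*(W;\Lambda)\to CF_*(W,\psi(W);\Lambda)=0$ toward the displaced copy, and continuation maps induced by Hamiltonian isotopies are \emph{simple} homotopy equivalences (along a generic path of Hamiltonians the complex changes only by elementary collapses at birth--death moments and by unipotent triangular maps in between) --- the Floer counterpart of the statement, underlying the Morse-theoretic proof of the $s$-cobordism theorem, that a homotopy of Morse functions induces a simple equivalence of Morse complexes.

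The crux, and the place where the most care is needed, is precisely this refinement: one must control the \emph{simple} homotopy type, not merely the homotopy type, of the Floer complex --- both in comparing it with the cellular chain complex of the universal-cover pair $(\tilde W,\tilde L)$ and in the assertion that continuation maps are simple. These are Floer-theoretic analogues of standard facts about Morse functions, but transplanting them to the non-compact cobordism setting requires a careful analysis at the cylindrical ends, ensuring compactness of all relevant moduli spaces and ruling out energy escaping to infinity; a subsidiary technical point is to set up the twisted self-Floer homology of a non-compact Lagrangian cobordism so that, under a small perturbation, it genuinely computes the relative homology of the appropriate end.
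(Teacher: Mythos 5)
The paper does not prove this statement: Theorem~\ref{teo:pseudoisotopy} is quoted verbatim from~\cite{Sua} and used as a black box to deduce Corollary~\ref{cor:TrivialIfPi2ML=0}. The closest analogue inside the paper is the proof of Theorem~\ref{theo:HcovForWeakExactOrMonotone=0}, which establishes only the h-cobordism conclusion (not the diffeomorphism) under the broader hypothesis of weak exactness or monotonicity with $N_W>\dim W+1$.

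For the h-cobordism half, your proposal follows essentially the same route as that proof of Theorem~\ref{theo:HcovForWeakExactOrMonotone=0}: displaceability of the cobordism, vanishing of a Floer-type homology of $W$ with $\Z[\pi_1(W)]$-local coefficients, comparison with the $\Z[\pi_1 W]$-Morse complex of the pair $(W,L)$ (resp.\ $(W,L')$), and an application of Whitehead's theorem after observing that $i_\sharp$ is an isomorphism. The only organizational difference is that the paper works with the Biran--Cornea pearl complex with local coefficients --- necessary in the monotone case, where discs contribute and one only gets a Morse subcomplex in the range $0\le *\le\dim W$ --- whereas in the exact setting of Theorem~\ref{teo:pseudoisotopy} you may, as you do, work directly with the Floer complex of a small adapted Hamiltonian perturbation, which is literally the Morse complex since exactness excludes all holomorphic discs.

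For the diffeomorphism statement your route --- $s$-cobordism theorem, identification of $\tau(\hat W,L)\in Wh(\pi_1 W)$ with the torsion of the based acyclic complex $CF_*(W;\Lambda)$, and killing it by showing the continuation to the displaced (empty) complex is a \emph{simple} homotopy equivalence via a Cerf-type bifurcation analysis --- is precisely the strategy of~\cite{Sua}. You correctly locate the technical crux there, and it deserves emphasis: the assertions that (i) the Floer-to-cellular comparison is \emph{simple}, not just a chain homotopy equivalence, and (ii) Floer continuation maps are simple, are not automatic transcriptions of Morse-theoretic facts. Both require a bifurcation analysis of one-parameter families of Floer data in which the only catastrophes are birth--death events and handle slides; ruling out disc bubbling (here, from exactness and $\mu|_{\pi_2(\tilde M,W)}=0$) and controlling compactness at the cylindrical ends of the non-compact cobordism (gradient/Hamiltonian behaviour pointing the right way at each end, no energy escaping to infinity) are where the real work of~\cite{Sua} lies, and as written your proposal asserts these rather than proves them.
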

Combining Corollary \ref{cor:IsoIfPi2ML=0} with the Theorem
\ref{teo:pseudoisotopy} we have the following corollary:

\begin{corollary}\label{cor:TrivialIfPi2ML=0}
  Let $(W;L,L')$ be an exact Lagrangian cobordism such that
  $\pi_{2}(M,L)=0=\pi_{2}(M,L')$. If $W$ is spin then it is an h-cobordism.
  Moreover, if $dim(W) \geq 6$ then there is a diffeomorphisms $W\cong \R \times L$.
\end{corollary}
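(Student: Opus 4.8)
The plan is to deduce the statement directly from Theorem~\ref{teo:pseudoisotopy}, once we have checked that an exact, spin Lagrangian cobordism $(W;L,L')$ with $\pi_2(M,L)=0=\pi_2(M,L')$ meets all of its hypotheses. Two of these hypotheses are not immediate and are the only points requiring attention: the injectivity of $i_\sharp$ on fundamental groups, which I would extract from Corollary~\ref{cor:IsoIfPi2ML=0}, and the vanishing of the Maslov class on $\pi_2(\tilde{M},W)$, which I would obtain by showing that this relative homotopy group is itself trivial.

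First I would record that an exact Lagrangian cobordism is weakly exact: if $F\colon W\to\R$ is a primitive of the restriction to $W$ of a primitive $1$-form of $\tilde{\omega}$ (which exists since $\omega$, hence $\tilde{\omega}$, is exact), then for every $u\colon(D^2,\partial D^2)\to(\tilde{M},W)$ one has $\int u^\ast\tilde{\omega}=\int_{\partial D^2}d(F\circ u)=0$. Thus the hypotheses of Theorem~\ref{teo:surjectivity} hold, and since moreover $\pi_2(M,L)=0=\pi_2(M,L')$, Corollary~\ref{cor:IsoIfPi2ML=0} applies and tells us that $i_\sharp\colon\pi_1(L)\to\pi_1(W)$ and $i_\sharp\colon\pi_1(L')\to\pi_1(W)$ are isomorphisms; in particular they are injective.

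Next I would show $\pi_2(\tilde{M},W)=0$, which makes the condition $\mu|_{\pi_2(\tilde{M},W)}=0$ automatic. Choose $z_0$ on the negative real axis, so that $\{z_0\}\times L\subset W$, and use the coordinates to regard $L\subset M=\{z_0\}\times M\subset\tilde{M}$. The straight-line retraction $(z,m)\mapsto(z_0,m)$ deformation retracts $\tilde{M}=\C\times M$ onto $\{z_0\}\times M$ while keeping $\{z_0\}\times L$ fixed, so $\pi_2(\tilde{M},L)\cong\pi_2(M,L)=0$. On the other hand, surjectivity of $\pi_1(L)\to\pi_1(W)$ forces $\pi_1(W,L)$ to be trivial. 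The long exact sequence of the triple $L\subset W\subset\tilde{M}$,
\[
\pi_2(\tilde{M},L)\longrightarrow\pi_2(\tilde{M},W)\longrightarrow\pi_1(W,L),
\]
then sandwiches $\pi_2(\tilde{M},W)$ between two trivial groups, so it vanishes.

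Finally, $(W;L,L')$ is exact and spin by hypothesis, $\mu|_{\pi_2(\tilde{M},W)}=0$ by the previous step, and $i_\sharp$ is injective on $\pi_1(L)$ and on $\pi_1(L')$ by the second step; Theorem~\ref{teo:pseudoisotopy} then yields that $(W;L,L')$ is an h-cobordism, and that $W\cong\R\times L$ as soon as $\dim(W)\geq 6$. There is no serious obstacle, since all the analytic content already lives in Theorem~\ref{teo:surjectivity} and Theorem~\ref{teo:pseudoisotopy}; the one place to be careful is the reduction of the Maslov hypothesis to the assumption $\pi_2(M,L)=0=\pi_2(M,L')$, where one must be slightly attentive to the homotopy type of the pair $(\tilde{M},W)$ near the ends.
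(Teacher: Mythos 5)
Your proposal takes exactly the route the paper intends: combine Corollary~\ref{cor:IsoIfPi2ML=0} (to obtain injectivity of $i_\sharp$) with Theorem~\ref{teo:pseudoisotopy}. The paper offers no argument for the corollary beyond this one-line citation and in particular never explains how the hypothesis $\mu|_{\pi_2(\tilde{M},W)}=0$ of Theorem~\ref{teo:pseudoisotopy} is discharged; your observation that in fact $\pi_2(\tilde{M},W)=0$---since $\pi_2(\tilde{M},L)\cong\pi_2(M,L)=0$ by the straight-line deformation retraction, $\pi_1(W,L)$ is trivial by the surjectivity from Theorem~\ref{teo:surjectivity}, and the long exact sequence of the triple $L\subset W\subset\tilde{M}$ then sandwiches $\pi_2(\tilde{M},W)$ between two trivial terms---is precisely the missing step, and it is correct. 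So this is the paper's approach with a gap that the authors leave implicit carefully filled in.
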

\begin{remark} 
  In particular, if $(W; L, L') \subset \tilde{M}$ where $M = 
  T^{*}N$ is a cotangent bundle of a closed manifold $N$, then $\pi_{2}(
  T^{*}N,  L) = 0 = \pi_{2}(
  T^{*}N,  L')$, and then Corollary \ref{cor:TrivialIfPi2ML=0}
  applies. To see this, notice that by \cite{Abu},\cite{Kr} the map
  $\rho:L\hookrightarrow T^{*}N \rightarrow N$ is a homotopy
  equivalence, on the other hand the projection $T^{*}N\rightarrow N $
  is a strong deformation retract, therefore the inclusion
  $L\hookrightarrow T^{*}N$ is a homotopy equivalence. In the same way one see $L'\hookrightarrow T^{*}N$ is a homotopy equivalence.
\end{remark}

Using similar techniques to the ones in the proof of the first statement of Theorem \ref{teo:pseudoisotopy}, based on the Biran-Cornea machinery for Lagrangian cobordisms \cite{BiCo}, we can show:

\begin{theorem}\label{theo:HcovForWeakExactOrMonotone=0}
Let $(W; L,L')$ be a weakly exact or a monotone (with $N_{W} > \text{dim}(W)+1$) spin Lagrangian cobordism. If $L,L' \hookrightarrow W$ induce injective morphisms $\pi_{1}(L)\xrightarrow{i_{\sharp}} \pi_{1}(W)$
  and $\pi_{1}(L')\xrightarrow{i_{\sharp}}\pi_{1}(W)$, then $W$ is an h-cobordism.    
\end{theorem}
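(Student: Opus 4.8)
The plan is to show that, under the injectivity hypothesis, both inclusions $L\hookrightarrow W$ and $L'\hookrightarrow W$ are homotopy equivalences, which is precisely what makes $(W;L,L')$ an h-cobordism. First I would invoke Theorem~\ref{teo:surjectivity}: it makes $\pi_{1}(L)\xrightarrow{i_{\sharp}}\pi_{1}(W)$ and $\pi_{1}(L')\xrightarrow{i_{\sharp}}\pi_{1}(W)$ surjective, so together with the hypothesis they are isomorphisms. As $L$, $L'$ and $W$ are smooth manifolds, Whitehead's theorem then reduces the statement to the vanishing
\[
 H_{*}(W,L;\Z[\pi_{1}(W)])=0=H_{*}(W,L';\Z[\pi_{1}(W)]),
\]
i.e.\ to $H_{*}(\tilde W,\tilde L)=0=H_{*}(\tilde W,\tilde L')$ for the universal cover $\tilde W$ and the preimages $\tilde L$, $\tilde L'$ (which are the universal covers of $L$ and $L'$ since $i_{\sharp}$ is an isomorphism).

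To obtain this vanishing I would rerun the Biran--Cornea machinery that proves a rigid cobordism is a ``quantum h-cobordism'' (\cite[Theorem~2.2.2]{BiCo}), but over the local system $\Z[\pi_{1}(W)]$ coming from the universal cover of $W$, exactly as the second author did in the exact setting in \cite{Sua}. Since $W$ is spin, the pearl complexes of $W$, of its ends, and of the auxiliary profile Lagrangians and Hamiltonian pushoffs entering the cobordism complex carry coherent orientations, hence are defined over $\Z$, and then over $\Z[\pi_{1}(W)]$ once each pearly trajectory is weighted by its class in $\pi_{1}(W)$ against fixed reference paths; every Lagrangian occurring is Hamiltonian isotopic to $W$ or to one of its ends, so the relevant local system is merely pulled back and is compatible with all the chain maps and long exact sequences used. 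In the weakly exact case no non-constant disc appears and the construction is immediate; in the monotone case the Biran--Cornea analysis of holomorphic strips in $\C\times M$ with boundary on $W$ and its pushoff --- which project holomorphically to $\C$ and are pinned down by the cylindrical ends --- goes through verbatim. The output is $QH(W,L;\Z[\pi_{1}(W)])=0=QH(W,L';\Z[\pi_{1}(W)])$.

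It remains to identify these quantum homologies with ordinary singular homology with the same coefficients, and this is where the hypothesis $N_{W}>\dim W+1$ is used. Since $\dim W=\dim L+1$, and any holomorphic disc on an end $L$ (resp.\ $L'$) produces, by leaving it constant in the $\C$ factor, a disc of the same Maslov number with boundary on $W$, one has $N_{W}\mid N_{L}$ and $N_{W}\mid N_{L'}$, whence $N_{L},N_{L'}>\dim L+1$ as well. Therefore, in each pearl complex in play, every quantum correction $d_{i}$ with $i\ge1$ raises the Morse degree by $iN_{W}-1\ge N_{W}-1>\dim W$, lands outside the complex, and so vanishes; the differentials reduce to the local-system-twisted Morse differentials, so the complexes compute $H_{*}$ with coefficients in $\Z[\pi_{1}(W)]$. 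Combined with the previous paragraph this yields $H_{*}(W,L;\Z[\pi_{1}(W)])=0=H_{*}(W,L';\Z[\pi_{1}(W)])$, and then the first paragraph finishes the proof.

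The crux is the middle step: verifying that the entire Biran--Cornea package --- profile functions, Hamiltonian pushoffs of the non-compact $W$, and the bookkeeping across the cylindrical ends --- remains valid over the non-trivial local system $\Z[\pi_{1}(W)]$, i.e.\ that the $\pi_{1}(W)$-weights of pearly trajectories stay coherent under gluing and under the maps comparing the cobordism complex with the complexes of the ends, and that no Novikov-completion subtlety intervenes. The exact analogue was carried out in \cite{Sua}, and the condition $N_{W}>\dim W+1$ only simplifies matters here, since all quantum terms drop out, so I expect this to be careful bookkeeping rather than a new difficulty.
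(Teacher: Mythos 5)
Your proposal matches the paper's proof essentially step for step: use Theorem~\ref{teo:surjectivity} to upgrade the injectivity hypothesis to $\pi_{1}$-isomorphisms (so $p^{-1}(L)=\tilde L$, etc.), set up the Biran--Cornea pearl complex for $(W,L)$ over $\Z[\pi_{1}(W)]$ as in \cite{Sua}, use displaceability (via a local-coefficient PSS isomorphism, the paper cites \cite{Za}) to get $QH(W,L;\Z[\pi_{1}(W)])=0$, use $N_{W}>\dim W+1$ to kill all quantum corrections so that the twisted Morse complex $CM(W,L;\Z[\pi_{1}(W)])$ embeds with injective map on homology, deduce $H_{*}(\tilde W,\tilde L;\Z)=0$, and finish with the long exact sequence and Whitehead. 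The extra remarks in your sketch (e.g.\ $N_{W}\mid N_{L}$, hence $N_{L}>\dim L+1$) are correct but not needed in the paper's argument, and like the paper you leave the coherence of the local system across the Biran--Cornea constructions to a citation rather than a verification; otherwise this is the same proof.
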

Recall that an h-cobordism is a cobordism $(W; L, L')$ where the maps defined by the inclusions $L, L' \hookrightarrow W$ are homotopy equivalences.
\begin{remark}The previous theorem was proved in \cite[Corollary 2]{Sua} for exact Lagrangian cobordisms and it is based on a generalization of the result \cite[Theorem 2.2.2]{BiCo}. In fact, we expect to prove that a weakly exact or a monotone (with $N_{W} > \text{dim}(W)+1$) spin Lagrangian cobordism $(W; L,L')$ with $\text{dim}(W)\geq 6$ is deffeomorphic to $\R \times L$. For general monotone Lagrangian cobordism this is not true, Haug \cite[Theorem 1.5]{Ha} constructed examples of monotone Lagrangian cobordisms with $N_{W} < \text{dim}(W)$ and that are not diffeomorphic to  $\R \times L$. 
\end{remark}
The structure of the paper is as follows. In Section 2 we present the
definition of the Floer fundamental group of a compact Lagrangian,
adapting the construction in \cite{Pi1} to the Lagrangian setting. In
Section 3 we adapt the Floer fundamental group to a non-compact
Lagrangian with cylindrical ends. In Section 4 we give the proofs of
Theorem \ref{teo:surjectivity} and Corollary \ref{cor:IsoIfPi2ML=0} and Theorem \ref{theo:HcovForWeakExactOrMonotone=0}.

\section{Fundamental group of a Lagrangian submanifolds}
  
\subsection{Quick review of moduli spaces}\label{sec:ModuliSpaces}
Let $(M,\omega)$ be a tame symplectic manifold, this means that there is
an almost complex structure $J$ on $M$ such that $g(\cdot, \cdot) =
\omega(\cdot , J\cdot)$ is a Riemannian metric and such that the
Riemannian manifold $(M, g)$ is complete, the sectional curvature of $g$
is bounded and the injectivity radius bounded away from zero. The space
of almost complex structures on $M$ is denoted by $\mathcal{J}(M)$. Let
$J\in \mathcal{J}(M)$ be a $\omega$-compatible almost complex structure.

Let $L\subset M$ be a compact Lagrangian submanifold. There are two
morphisms associated to $L$ the symplectic area $\omega$ and the Maslov
index $\mu$: 

$$
\omega: \pi_{2}(M, L) \rightarrow \R, 
\lambda = [u] \mapsto \omega([u])= \int \limits_{D^{2}} u^{*}(\omega) 
\text{ and }\mu:\pi_{2}(M,L)\rightarrow \Z.
$$
The positive generator of the image of the homomorphism $\mu$ is called
the minimal Maslov number of $L$ and is denoted by $N_{L}$.

Lagrangians are assumed to be either 
\begin{itemize}
\item
  weakly exact: $\omega\vert_{\pi_{2}(M,L)} = 0$ or
\item 
  monotone: there is a
  positive real number $\rho \in \R$ such that for every disk $u:(D^{2},S^{1})\rightarrow (M, L)$ we have $\omega(u)= \rho\mu(u) $. 
\end{itemize} 
We assume here that all monotone Lagrangians satisfy $N_{L}> \text{dim}(L)$+1.

Let $H: M\times [0, 1] \rightarrow \R$ be a Hamiltonian, denote by
$(\phi^{t})_{t\in[0,1]}$ the induced Hamiltonian isotopy and let
$L_{1}=\phi^{1}(L)$ be such that $L\pitchfork L_{1}$.

An intersection point $x\in L\cap L_{1}$ is called \textit{fillable} if the
Hamiltonian trajectory that ends at $x$, considered as a path from $L$ to
$L$, bounds a disc $\sigma$ relative to $L$, i.e $[(\phi^{t})^{-1}(x)]=
0\in \pi_{1}(M,L)$. Such fillings have a well defined Maslov index $\mu(\sigma)$.

Let $\Orbits(L,L_{1})=\{x\in L\cap L_{1} \text{ , }[(\phi^{t})^{-1}(x)]= 0\in
\pi_{1}(M,L)\}$ be the set of \textit{fillable} intersection points. We
denote by $\CovOrbits(L,L_{1})$ the covering space with fiber $\pi_{2}(M,L)/\ker\mu$. For
$x=(\underline{x},\sigma)\in\CovOrbits(L,L_{1})$, we write $$|x|=|(\underline{x},\sigma)| = \mu(\sigma).
$$
Finally, we let $\CovOrbits_{k}(L,L_{1})=\{x\in\CovOrbits(L,L_{1}), |x|=k\}$.

\begin{remark}
Several versions of Floer trajectories can be used to define
the differential of the Floer complex. The closest one to the point of
view used by the first author in \cite{Pi1}, is that of half tubes with
boundary on $L$ satisfying the non-homogeneous Floer equation. From this
point of view, the absolute and relative cases are strictly parallel~:
the objects of the latter are all ``halves'' of the objects of the former
with boundary in $L$ and satisfy the same equations. All the arguments
are the same, as long as no bubbling occurs on $0$ and $1$ dimensional
moduli spaces.

However, the most convenient point of view for a cobordism oriented
purpose like ours is that of strips satisfying ``moving boundary
conditions''. All the Floer trajectories, augmentations or
co-augmentations are everywhere holomorphic and hence much easier to control in the cobordism framework. In consequence, the construction of
the fundamental group in the relative case will be developed using this
point of view. Aside the definition of the moduli spaces, this of course
does not essentially affect the construction which remains parallel to
the absolute case.
\end{remark}

Let $J$ be a $\omega$-compatible almost complex structure. We are
interested in the Floer moduli spaces and their classical variants. 

We fix once for all a smooth decreasing function $\beta:\R\to\R$ such that 
\begin{align*}
  \beta(s)=1 & \text{ for }s\leq-1 \\
  \beta(s)=0 & \text{ for }s\geq0 
\end{align*}
and use it to move the boundary condition on the map $u \in C^{\infty}(\R\times[0,1], M)$ by considering the equations~:
\begin{equation}
  \tag{$F_{i}$}
  \label{eq:FloerEqn_i}
  \begin{gathered}
      \forall s\in\R\text{, } u(s,0)\in L
  \\
  \forall s\in\R\text{, } u(s,1)\in \phi^{\chi_{i}(s)}(L)
  \\
  \frac{\partial u}{\partial s} + J(u)\frac{\partial u}{\partial t}=0.
  \end{gathered}
\end{equation}
for the following collection $\chi_{1},\dots,\chi_{4}$ of cut-off functions derived from $\beta$~:
\begin{enumerate}
\item 
  $\chi_{1}\equiv 1$, which defines the fixed boundary Floer equation, 
\item 
  $\chi_{2}(s)=\beta(s)$, which defines the augmentation equation, 
\item 
  $\chi_{3}(s)=\beta(-s)$, which defines the co-augmentation equation.
\item 
  $\chi_{4,R}(s)=\beta(s-R)\beta(-s-R)$, which defines the $R$-perturbed
  strip equation, where $R\in[0,+\infty)$.
\end{enumerate}
The energy of a map $u$ is defined by:
\begin{equation}
 E(u):= \iint \limits_{\R
   \times [0, 1]}\Vert\frac{\partial u}{\partial s}\Vert^{2}dsdt. 
\end{equation}
Finite energy solutions of \eqref{eq:FloerEqn_i} have converging ends,
either to a point of $L$ when $\chi_{i}(s)=0$ on this end or to an
intersection point of $L$ and $L_{1}$ when $\chi_{i}(s)=1$. Let $\U = \{u\in
C^{\infty}(\R \times [0, 1], M) \text{ }|\text{ } E(u) < \infty \}$. Let
$\star$ be a point in $L$. We are interested in the moduli spaces below~:
\begin{align*}
\M(y,x)&=
\{u\in\U,(F_{1}), \lim_{s\to\pm\infty}
u(s,t)=|\begin{smallmatrix}x\\y\end{smallmatrix},[y\sharp u\sharp \bar{x}]=0\}\\ 
\M(x,\emptyset)&=
\{u\in\U,(F_{2}), \lim_{s\to-\infty} u(s,t)=x,[x\sharp
u]=0\}\\
\M(\star,x)&=
\{u\in\U,(F_{3}), \lim_{s\to\pm\infty}
u(s,t)=|\begin{smallmatrix}x\\ \star \end{smallmatrix},[u\sharp \bar{x}]=0\}\\
\M(\star,\emptyset)&=\{(u, R)\text{, } u\in\U,\exists R\geq0 \text{ }(F_{4,R}), \lim_{s\to-\infty}
u(s,t)=\star,[u]=0\}
\end{align*}
The bracket denotes a class in $\pi_{2}(M,L)/\ker \mu$, the vanishing
conditions ensure the compatibility of the homotopy class of $u$ with
the capping of its ends $x$ or $y$ in the usual way. The overline $\bar{x}$ denotes the inverse class of the class $x=(\underline{x},\sigma)$; i.e where the capping $\sigma$ has the reversed orientation.


For a generic choice of $(H,J,\star)$, these moduli spaces are smooth
manifolds \cite{FlHoSa}. The last moduli space $\M(\star,\emptyset)$
involves constant maps for $R = 0$, for which the key argument in the
proof of transversality, of being “somewhere injective” fails.
\begin{proposition} \label{prop: transversality}
  Let $\pi:\M(\star,\emptyset)\to \R$ denote the projection. For $R = 0$,
  $\pi^{-1}(R)$ consists in the single point $(u_{\star}, 0)$ where
  $u_{\star}$ is the constant map at $\star$. Moreover, this solution is
  regular, which means that (in the suitable functional spaces) the
  equation defining the moduli space $\M(\star,\emptyset)$ is a
  submersion at this point.
\end{proposition}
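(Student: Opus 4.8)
The plan is to treat the two assertions in turn: the identification of $\pi^{-1}(0)$, which is an energy argument, and the regularity of the constant solution $(u_\star,0)$, which is a Fredholm computation at a constant map.

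For the first assertion, I would begin by observing that $\chi_{4,0}\equiv 0$: since $\chi_{4,0}(s)=\beta(s)\beta(-s)$ and $\beta$ is decreasing with $\beta\equiv 1$ on $(-\infty,-1]$ and $\beta\equiv 0$ on $[0,\infty)$, one of the two factors vanishes for every $s$. Hence $(F_{4,0})$ is simply the $J$-holomorphic strip equation with both boundary arcs on $L$, and for any finite energy solution $u$ the ends converge to points of $L$, so the capped class $[u]\in\pi_2(M,L)/\ker\mu$ used in the definition of $\M(\star,\emptyset)$ is well defined and $E(u)=\int u^*\omega=\omega([u])$. The constraint $[u]=0$ forces $\mu([u])=0$; in the weakly exact case $\omega$ vanishes on all of $\pi_2(M,L)$, and in the monotone case $\omega([u])=\rho\,\mu([u])=0$. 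Either way $E(u)=0$, so $u$ is constant, and the asymptotic condition $\lim_{s\to-\infty}u=\star$ gives $u=u_\star$. This shows $\pi^{-1}(0)=\{(u_\star,0)\}$.

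For regularity, I would linearize the section cutting out $\M(\star,\emptyset)$ at $(u_\star,0)$. The equation $\bar\partial_J u=0$ does not involve $R$, and a direct computation gives $\frac{d}{dR}\big|_{R=0}\chi_{4,R}\equiv 0$: this derivative equals $-\beta'(s)\beta(-s)-\beta(s)\beta'(-s)$, and $\operatorname{supp}\beta'\subset[-1,0]$ is disjoint from the locus where the paired factor is nonzero. Thus the $R$-direction does not enter the linearization at $R=0$, which therefore reduces to the standard Cauchy--Riemann operator $D_{u_\star}$ acting on sections $\xi$ of $u_\star^*TM$ with boundary condition $\xi(s,0),\xi(s,1)\in T_\star L$, in the weighted Sobolev spaces adapted to the ends (exponential decay at $-\infty$, convergence into $T_\star L$ at $+\infty$). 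It remains to see that $D_{u_\star}$ is surjective. Trivializing $u_\star^*TM\cong\C^n$ with $T_\star L\cong\R^n$, the operator $D_{u_\star}=\partial_s+i\,\partial_t$ has constant coefficients; its cokernel is the kernel of the formal adjoint, again a $\bar\partial$-type operator with totally real boundary conditions, whose finite energy elements extend by Schwarz reflection across the two boundary arcs to bounded holomorphic maps on $S^2$ (the strip with its two ends filled in), hence are constant, hence are killed by the weight at $-\infty$. So $\operatorname{coker}D_{u_\star}=0$. This is the familiar statement that constant discs are regular, invoked here precisely because a somewhere-injective point is not available at a constant map.

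The step I expect to be the main obstacle is the functional-analytic bookkeeping around the boundary value $R=0$: choosing the exponential weights at $\pm\infty$ so that $D_{u_\star}$ is Fredholm of the expected index with vanishing cokernel, and matching this with the half-open parameter $R\in[0,\infty)$ so that near $(u_\star,0)$ the moduli space $\M(\star,\emptyset)$ is a manifold with boundary whose boundary is exactly $\{R=0\}$.
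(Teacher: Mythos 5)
Your proof is correct and follows the same overall strategy as the paper: show that at $R=0$ the perturbation disappears (you correctly note $\chi_{4,0}\equiv 0$, and that $\frac{d}{dR}\big|_{R=0}\chi_{4,R}\equiv 0$, so the $R$-direction drops out of the linearization), conclude that $\pi^{-1}(0)$ is just the constant map by the trivial-class/zero-energy argument, and then prove surjectivity of the linearized Cauchy--Riemann operator at the constant map. Two minor differences in execution are worth flagging. For surjectivity, the paper argues by computing that $\ker F$ consists of constants so $\dim\ker F=n$, and since $\operatorname{ind}F=n$ this forces $\operatorname{coker}F=0$; you instead identify $\operatorname{coker}D_{u_\star}$ with the kernel of the adjoint and kill it by Schwarz reflection plus the decay weight. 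Both are standard and valid. More substantively, the paper reformulates the whole problem on the closed disk $\bar D$ (replacing the asymptotic condition by a point constraint $u(i)=\star$), which makes the Fredholm setup the textbook compact one with totally-real boundary conditions and completely avoids the weighted-Sobolev bookkeeping at $\pm\infty$ that you yourself single out as ``the main obstacle''; if you adopt that reformulation, the concern you raise at the end disappears.
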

\begin{proof}
  For a map $(u,R) \in \M(\star,\emptyset)$ the map
  $(\phi^{\chi_{4,R}(s)t})^{-1}(u(s,t))= v(s,t)$ has fixed boundary
  conditions on $L$ and satisfies a perturbed Cauchy-Riemann equation.
  Notice that we can reformulate the problem in terms of maps from
  $u:(\bar{D}, \partial \bar{D}) \to (M, L)$ in the trivial homology
  class, where $\bar{D}$ denotes the unit disk and $\partial \bar{D} =
  S^{1}$. The condition $\lim\limits_{s\to-\infty}u(s,t)=\star$ is then
  replaced by $u(i) = \star$.

  For $R = 0$ we have $(\phi^{\chi_{4,R}(s)t})^{-1}\equiv Id$ and
  equation \textit{(F4,R)} is the Cauchy Riemann equation: 
\begin{equation}\label{eq:Jhol}
  D(u)=\frac{\partial u}{\partial s} + J(u)\frac{\partial u}{\partial
    t}=0.
\end{equation}
Points of $\M(\star, \emptyset)$ lying above $R = 0$ are hence
$J$-holomorphic disks (with boundary condition on $\R^{n}$) in the
trivial homology class and are therefore constant. The additional
condition $u(i) = \star$ implies $\pi^{-1}(0) = \star$. The linearization
(with respect to u) of the left hand term in \ref{eq:Jhol} at the
constant map $u_{\star}$ leads to a linear operator $F$  with real
boundary condition, defined for maps from $(\bar{D}, \partial \bar{D})$
to a fixed $(\C^{n},\R^{n})= (T_{\star}M, T_{\star}L)$ of the form
\begin{equation}
  F\dot{u}=D\dot{u}+ J D \dot{u} i
\end{equation}
where $J = J_{\star}$ is constant. The Kernel of $F$ consists of the
holomorphic disks in $\C^{n}$ with boundary condition in $\R^{n}$ and
hence of the constants. It is therefore $n$-dimensional and since $n$ is
also the index of $F$, this implies that $F$ is surjective, which easily
implies the required submersion property. 
\end{proof}

The dimension depends on the ends in the following way~:
\begin{align*}
  \dim\M(y,x)&=|y|-|x|\\
  \dim\M(x,\emptyset)&=|x|\\
  \dim\M(\star,x)&=-|x|\\
  \dim\M(\star,\emptyset)&=1\\
\end{align*}
These spaces are compact up to breaks at intermediate intersection points
or bubbling of discs. However, the condition $\omega_{|\pi_{2}(M,L)}=0$
guaranties that there are no non-constant holomorphic discs in $M$ with
boundary on $L$. For monotone Lagrangian, bubbling may occur, either
``on the side'' of the strip, or the strip itself might brake in the area
where the hamiltonian term is turned off. The condition $N_{L}\geq 3$ is
enough to prevent side bubbling on moduli spaces up to dimension $2$, but
the second phenomenon may still occur on the moduli space
$\M(\star,\emptyset)$, leading to configurations that consist of an
holomorphic disc through $\star$ in some homotopy class $\alpha$,
followed by a Floer strip where the hamiltonian term is turned off near
both ends in the homotopy class $-\alpha$. The existence of such
configurations is forbidden by the condition $N_{L}> \text{dim}(L)+1$ .

All the moduli spaces of interest to us are hence compact up to breaks at
intermediate intersection points, and there is a gluing construction
ensuring that each broken configuration does indeed appear on the
boundary of some moduli space.

In particular, given some $x\in\CovOrbits_{0}(L,L_{1})$ and
$y\in\CovOrbits_{1}(L,L_{1})$, each broken trajectory
$(\beta,\alpha)\in\M(y,x)\times\M(x,\emptyset)$ belongs to the boundary
of one component of $\M(y,\emptyset)$. Since this last space is
$1$-dimensional, it has to have another end
$(\beta',\alpha')\in\M(y,x')\times\M(x',\emptyset)$ for some
$x'\in\CovOrbits_{0}(L,L_{1})$. We denote this relation between
$(\beta,\alpha)$ and $(\beta',\alpha')$ by
$$
(\beta,\alpha)\glueto(\beta',\alpha').
$$
\begin{definition}\label{def: augmentation}
  Given an index 0 intersection point $x \in \CovOrbits_{0}(L,L_{1})$, a
  capping $\alpha \in \M(x,\emptyset)$ is called an “augmentation” of
  $x$, and the couple $(x, \alpha)$ an augmented orbit.
\end{definition}

\subsection{Steps and loops}

\begin{definition}\label{def:FloerLoop}
  A Floer step is an oriented connected component with non-empty boundary
  of a 1-dimensional moduli space $\M(y,\emptyset)$ for $y\in\CovOrbits_{1}(L,L_{1})$ or
  $\M(\star,\emptyset)$.
\end{definition}

More explicitly, a Floer step in $\M(y,\emptyset)$ for
$y\in\CovOrbits_{1}(L,L_{1})$ can be identified to a quadruple
$(\alpha_{0},\beta_{0},\beta_{1},\alpha_{1})$ where
$\beta_{i}\in\M(y,x_{i})$ for some $x_{i}\in\CovOrbits_{0}(L,L_{1})$, and
$\alpha_{i}\in\M(x_{i},\emptyset)$ are such that
$$
(\beta_{0},\alpha_{0})\glueto(\beta_{1},\alpha_{1}).
$$

In $\M(\star,\emptyset)$, exactly one special step
$$
(\star) \glueto (\alpha_{\star},\beta_{\star})
$$ 
has the constant $J$-holomoprhic disc at $\star$ as one end (and a broken
configuration $(\beta_{\star},\alpha_{\star})$ as the other end), while
all the other steps can be described as a quadruple
$(\alpha_{0},\beta_{0},\beta_{1},\alpha_{1})$ where
$\beta_{i}\in\M(\star,x_{i})$ for some $x_{i}\in\CovOrbits_{0}(L,L_{1})$, and
$\alpha_{i}\in\M(x_{i},\emptyset)$ are such that
$$
(\beta_{0},\alpha_{0})\glueto(\beta_{1},\alpha_{1}).
$$

With these notations, $\alpha_{0}$ is the start of
the step, and $\alpha_{1}$ its end.

\begin{definition}
A Floer based loop is a sequence of consecutive Floer loop steps starting and ending at $\star$.

In other words a Floer based loop is a sequence of consecutive steps that starts and ends at $\star$, i.e. a sequence
$(\star,\alpha_{0},\beta_{0})(\alpha_{0},\beta_{0},\beta'_{0},\alpha'_{0}),\dots
(\alpha_{N},\beta_{N},\beta'_{N},\alpha'_{N})(\alpha'_{N},\beta'_{N},\star)$
such that~:
  $$
  \alpha_{0}=\alpha_{\star},\ 
  \beta_{0}=\beta_{\star},\ 
  \alpha'_{0}=\alpha_{1},\ \dots,\ 
  \alpha'_{N-1}=\alpha_{N},\
  \alpha'_{N}=\alpha_{\star},\
  \beta'_{N}=\beta_{\star}.
  $$
and
  $$
  \forall i, (\beta_{i},\alpha_{i})\glueto(\beta'_{i},\alpha'_{i})
  $$
  The set of all Floer loops is denoted by $\tilde{\Gen}(L,L_{1})$.
\end{definition}

Notice that $\tilde{\Gen}(L,L_{1})$ depends on all the auxiliary data $(H,
\star, J,\chi)$ but the dependency on the almost complex structure $J$
and the cut off function $\chi$ is kept implicit to reduce the notation.
$\tilde{\Gen}(L,L_{1})$ carries an obvious concatenation rule that turns it
into a semi-group. It also carries obvious cancellation rules. More
explicitly, if $\sigma = (\alpha, \beta, \beta ' , \alpha ')$ is a Floer
loop step, define its inverse $\sigma^{-1}$ to be the same step with the
opposite orientation: $\sigma^{-1} = (\alpha ', \beta ', \beta, \alpha)$.

Denote by $\sim$ the associated cancellation rules in $\tilde{\Gen}(L,L_{1})$.
The concatenation then endows the quotient space
$$\Gen(L,L_{1})= \tilde{\Gen}(L,L_{1})\diagup \sim$$
with a group structure.

Picking a parametrisation by $[0,1]$ of all the relevant components of
the different moduli spaces, it also carries an evaluation map to the
based loop space of $L$. More precisely, recall that each strip $u$ in
$\M(y,\emptyset)$ or $\M(\star,\emptyset)$ converges to a point in $L$ as
$s$ goes to $+\infty$ denoted by $u(+\infty)$~; given a parametrisation
$[0,1]\to\M(y,\emptyset)$ of a component $\M$ of such a moduli space, the
map
$$
\begin{array}{ccc}
\M& \to & L\\
u &\mapsto &u(+\infty)  
\end{array}
$$
defines a path in $L$. Concatenating the paths associated to all the
loops, we get a loop in $L$ based at $\ast=\alpha_{\star}(+\infty)$~:
$$
\Gen(L,L_{1})\xrightarrow{\eval}\pi_{1}(L,\ast).
$$

The main statement of this section is then the following~:
\begin{theorem}\label{thm:Onto}
  If $L$ is weakly exact or monotone with $N_{L}> \text{dim}(L) + 1$, then the evaluation
  map
  $$
  \Gen(L,L_{1})\xrightarrow{\eval}\pi_{1}(L,\ast)
  $$
  is onto.
\end{theorem}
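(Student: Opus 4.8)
The plan is to show that every based loop $\gamma$ in $\pi_1(L,\ast)$ is realized by the evaluation of some element of $\Gen(L,L_1)$, and the natural way to do this is to compare the evaluation map with its absolute (Hamiltonian) counterpart, where the analogous statement is already established in \cite{Pi1}. More precisely, the moduli spaces $\M(x,\emptyset)$, $\M(\star,x)$, $\M(y,\emptyset)$, $\M(\star,\emptyset)$ used here are, by the Remark above, in one-to-one correspondence with ``halves'' of the closed-string augmentation moduli spaces in $M$, so the combinatorial structure of steps and loops is literally the same; the only point requiring care is that the evaluation map here records $u(+\infty)$, i.e. the limit point \emph{on $L$}, rather than a point of $M$. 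First I would recall that the image of $\eval$ is a subgroup of $\pi_1(L,\ast)$, since $\Gen(L,L_1)$ is a group and $\eval$ is a homomorphism (concatenation of loops maps to concatenation of paths, inverse steps to reversed paths); hence it suffices to exhibit a generating set of $\pi_1(L,\ast)$ in the image.

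The key geometric input is a \emph{cobordism argument along the special step}. Consider the one-dimensional moduli space $\M(\star,\emptyset)$: by Proposition \ref{prop: transversality} its boundary over $R=0$ is the single constant disc $u_\star$ at $\star$, whose evaluation is the constant loop at $\ast$. Now fix an arbitrary smooth loop $\gamma:[0,1]\to L$ based at $\ast$. I would first show that $\gamma$ can be deformed, rel endpoints, to a loop of the form $u(+\infty)$ for a generic path $u_t$ in the Floer-strip configuration space with the moving boundary condition $F_{4,R(t)}$, $R(t)\to+\infty$ at the ends. The point is that as $R\to+\infty$ the strip equation $F_{4,R}$ degenerates: the strip breaks into a broken augmentation/co-augmentation configuration, exactly the quadruples $(\alpha_0,\beta_0,\beta_1,\alpha_1)$ that index the non-special steps, with total evaluation a loop in $L$. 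Homotoping $R$ from $0$ to a large value thus produces, via the standard gluing/breaking analysis already quoted in the excerpt, a Floer based loop whose evaluation is homotopic to the concatenation of the path swept by $u(+\infty)$; by choosing the homotopy class of $u_t$ appropriately (this is where a chosen loop in $L$ enters) one recovers an arbitrary class of $\pi_1(L,\ast)$.

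Concretely, the steps in order are: (1) note $\mathrm{im}(\eval)$ is a subgroup; (2) fix a loop $\gamma$ in $L$ based at $\ast$ and a generic family of maps $u_t$ in $\M(\star,\emptyset)$ over the parameter $R\in[0,\infty)$ such that the evaluation path traverses $\gamma$ — here one uses that $\pi_1(L,\ast)$ is generated by such ``visible'' loops, a fact inherited verbatim from the absolute setting in \cite{Pi1} once one records that the Floer data is dense enough; (3) run the compactness-and-gluing dictionary from the excerpt to identify the endpoints of the corresponding one-dimensional moduli component with the special step $(\star)\glueto(\alpha_\star,\beta_\star)$ on one side and a genuine broken quadruple on the other, assembling these into an element of $\Gen(L,L_1)$; (4) check that the evaluation of this element is homotopic rel basepoint to $\gamma$, using that the only ``invisible'' part — the constant disc — contributes the constant loop. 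Monotonicity with $N_L>\dim L+1$ (resp. weak exactness) is used exactly once, to rule out the extra breaking of $\M(\star,\emptyset)$ into a holomorphic disc through $\star$ plus a turned-off Floer strip, as noted in the excerpt; without this the combinatorics of steps would be polluted by disc bubbles.

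The main obstacle I expect is step (2)–(4): making precise that \emph{every} homotopy class of loop in $L$, and not merely those in some a priori smaller subgroup, is swept out by the $R\to\infty$ degeneration. This requires showing that the assignment ``homotopy class of the family $u_t$'' $\mapsto$ ``evaluation loop'' is surjective onto $\pi_1(L,\ast)$, which in \cite{Pi1} follows from a transversality-plus-connectedness argument for the parametrized moduli space; transplanting it here means re-running that argument with the moving-boundary equations $F_{4,R}$ and checking that no new compactness phenomenon (side bubbling, or breaking in the turned-off region) interferes on moduli spaces of dimension $\le 2$ — which is precisely what the numerical hypotheses $N_L\ge 3$ and $N_L>\dim L+1$ are designed to guarantee. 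Everything else is the by-now-standard Floer-theoretic package (transversality from \cite{FlHoSa}, energy bounds forcing convergent ends, gluing) already invoked in the excerpt.
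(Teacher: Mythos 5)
Your proposal does not follow the route the paper takes, and as written it has a real gap that coincides exactly with the ``main obstacle'' you flag at the end.

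The central problem is in steps (2)--(4). You speak of ``a generic path $u_t$ in the Floer-strip configuration space with the moving boundary condition $F_{4,R(t)}$'' chosen so that ``the evaluation path traverses $\gamma$.'' But a path of pairs $(u,R)$ with $u$ solving $(F_{4,R})$ is exactly a path in the moduli space $\M(\star,\emptyset)$, and a Floer loop in the sense of Definition \ref{def:FloerLoop} is a concatenation of oriented components of the \emph{actual} one-dimensional moduli spaces $\M(y,\emptyset)$ and $\M(\star,\emptyset)$. Once the auxiliary data $(H,J,\star)$ is fixed these components are what they are: you have no freedom to ``steer'' them so that $u(+\infty)$ traces a prescribed loop $\gamma$. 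Your phrase ``by choosing the homotopy class of $u_t$ appropriately... one recovers an arbitrary class of $\pi_1(L,\ast)$'' is precisely the assertion of the theorem, not a proof of it. Similarly, you attribute the surjectivity to a ``transversality-plus-connectedness argument for the parametrized moduli space'' inherited from \cite{Pi1}; but that is not the mechanism used there or here, and no such parametrized connectedness statement is available to you a priori.

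What the paper actually does is route through an auxiliary Morse function $f$ on $L$ with unique minimum at $\star$. Morse loops $\Gen(f)$ -- the free group on index-$1$ critical points -- evaluate onto $\pi_1(L,\star)$ by classical Morse theory (a generic loop, pushed down by the negative gradient flow, limits onto the $1$-skeleton). The whole content is then the construction of a morphism $\psi:\Gen(f)\to\Gen(L,L_1)$ and the proof (Proposition \ref{prop:comutative}) that $\eval\circ\psi$ and $\eval$ agree up to homotopy, so that the commutative diagram \eqref{eq:evalDiagramPsi} transports surjectivity from the Morse side to the Floer side. The morphism $\psi$ is built, for each index-$1$ critical point $b$, by running the ``crocodile walk'' -- alternating the two gluing bijections $\sharp^{\bullet}$, $\sharp_{\bullet}$ -- on the finite set $B_b$ of broken hybrid Morse--Floer configurations $\M(b,y)\times\M(y,x)\times\M(x,\emptyset)$ together with the two degenerate configurations $(\gamma_\pm,\star)$; the closed orbit through $(\gamma_-,\star)$ and $(\gamma_+,\star)$ produces the required Floer loop, and the family of rays rooted at $b$ swept out along that orbit gives the nullhomotopy showing $\eval(\psi(b))\simeq\eval(b)$. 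None of this hybrid Morse--Floer structure appears in your proposal, and without it (or an equally explicit replacement) you have no way to produce, for a given $\gamma\in\pi_1(L,\ast)$, an actual element of $\Gen(L,L_1)$ evaluating to it.

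Your step (1) (that the image of $\eval$ is a subgroup) and the observation about where the hypothesis $N_L>\dim L+1$ enters are fine and consistent with the paper; it is the realization of an arbitrary class that your argument does not reach.
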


In other words, if $\sim$ is the homotopy equivalence relation in
$\Gen(L,L_{1})$, then~:
\begin{corollary}
  The map $\Gen(L,L_{1})/_{\sim} \xrightarrow{\eval}\pi_{1}(L,\ast)$ is an
  isomorphism.
\end{corollary}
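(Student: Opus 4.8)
The plan is to deduce the corollary formally from Theorem~\ref{thm:Onto}, the only point requiring verification being that $\eval$ descends to a well-defined group homomorphism on $\Gen(L,L_{1})$ and that $\sim$ is exactly the congruence this homomorphism induces.

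First I would check that the assignment of a based loop in $L$ to each element of $\tilde{\Gen}(L,L_{1})$ passes to the quotient by the cancellation relation and is compatible with concatenation. Compatibility with concatenation is immediate from the definitions: concatenating two Floer based loops juxtaposes the two sequences of steps, hence juxtaposes the corresponding sequences of paths in $L$, and since both loops are based at $\ast=\alpha_{\star}(+\infty)$ this juxtaposition is exactly the product in $\pi_{1}(L,\ast)$. For the cancellation relation, inserting or deleting a pair $\sigma\sigma^{-1}$ --- where $\sigma^{-1}$ is the step $\sigma$ run with the opposite orientation, so its associated path in $L$ is the reverse of that of $\sigma$ --- inserts or deletes the concatenation of a path with its own reverse, which is null-homotopic rel endpoints; hence $\eval$ is constant on cancellation classes. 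Therefore $\eval\colon\Gen(L,L_{1})\to\pi_{1}(L,\ast)$ is a well-defined group homomorphism.

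Surjectivity of this homomorphism is precisely the content of Theorem~\ref{thm:Onto}, and it obviously remains surjective after passing to the further quotient $\Gen(L,L_{1})/_{\sim}$. For injectivity, recall that $\sim$ is by definition the homotopy-equivalence relation: two Floer based loops are $\sim$-equivalent exactly when the associated based loops in $L$ are homotopic rel $\ast$, i.e.\ exactly when they have the same image under $\eval$. Thus $\sim$ is the congruence on the group $\Gen(L,L_{1})$ induced by $\eval$, so the map $\overline{\eval}\colon\Gen(L,L_{1})/_{\sim}\to\pi_{1}(L,\ast)$ induced on the quotient is injective by construction. By the first isomorphism theorem, $\overline{\eval}$ is an isomorphism, which is the assertion of the corollary.

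I do not expect a genuine obstacle here: all the analytic weight --- the compactness-up-to-breaking and gluing statements of Section~\ref{sec:ModuliSpaces}, the regularity of the exceptional step (Proposition~\ref{prop: transversality}), and the hypothesis $N_{L}>\dim(L)+1$ ruling out the stray bubbled configurations on $\M(\star,\emptyset)$ --- has already been spent in proving Theorem~\ref{thm:Onto}. The only thing to be careful about is the bookkeeping above: that the evaluation map is genuinely a homomorphism and not merely a map of sets, and that $\sim$ is literally the $\eval$-induced congruence. If instead one wished to take $\sim$ to be a Floer-intrinsic relation, generated by sliding Floer based loops across the one-parameter families carried by the two-dimensional moduli spaces $\M(y,\emptyset)$ with $y\in\CovOrbits_{2}(L,L_{1})$, then injectivity would become the substantive point: one would have to promote a null-homotopy of $\eval(\gamma)$ in $L$ to a finite chain of such moves, via a relative, one-higher-parameter version of the compactness-and-gluing package, exactly paralleling the argument in \cite{Pi1} --- and that would then be the main difficulty.
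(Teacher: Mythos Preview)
Your proposal is correct and matches the paper's approach: the paper treats the corollary as a tautological restatement of Theorem~\ref{thm:Onto} (prefacing it with ``In other words''), precisely because $\sim$ is defined to be the homotopy equivalence relation pulled back along $\eval$, so injectivity is by construction and surjectivity is the theorem. Your careful verification that $\eval$ is a genuine group homomorphism and your remark about what would change under a Floer-intrinsic definition of $\sim$ are both apt additions, but they go beyond what the paper itself provides.
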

For a more economical presentation of the relations, but that uses an
auxiliary Morse function, we refer to \cite{Pi1} where the proposed
description should adapt straightforwardly.

The idea of the proof of theorem \ref{thm:Onto} is to associate, to each
Morse loop, an homotopic Floer loop. Since it is well known that Morse
loops generate the fundamental group, this is enough to obtain theorem \ref{thm:Onto}. 

\subsection{From Morse to Floer loops}\label{sec:FloerToMorse}

Pick a Morse function $f$ on $L$ with a single minimum at $\star$ and a
Riemannian metric $g$ on $L$. Assume $f, g$ are both chosen generically
so that all the Morse and PSS moduli spaces are defined transversely.
Namely, we are interested in the following moduli spaces~:
\begin{align*}
\M(b,a)&=(W^{u}(b)\cap W^{s}(a))/\R,
\text{ for } a,b\in\Crit(f),
\\ 
\M(b,x)&=\{u\in\M(\emptyset,x), u(-\infty)\in W^{u}(b)\},
 \text{ for } b\in\Crit(f), x\in\CovOrbits(L,L_{1}),
\\ 
\M(y,a)&=\{u\in\M(y,\emptyset), u(+\infty)\in W^{s}(a)\},
 \text{ for } y\in\CovOrbits(L,L_{1}), a\in\Crit(f).
\end{align*}

A Morse step is an oriented loop around the unstable manifold of an index
$1$ critical point of $f$. The space of Morse loops
$$
\Gen(f)=<\Crit_{1}(f)>
$$
is then the free group generated by the index $1$ critical points of $f$. 
Its elements encode sequences of oriented loops around the unstable
manifolds of index one critical points, and hence have a realization as
loops in $L$~:
$$
\Gen(f)\xrightarrow{\eval}\Loops(L,\star)\xrightarrow{\pi}\pi_{1}(L,\star)
$$

%

Let $b$ be an index $1$ critical point of $f$. Denote by $\gamma_{\pm}$ the
two Morse flow lines rooted at $b$~:
$$
\M(b,\star)=\{\gamma_{-},\gamma_{+}\}.
$$
Consider the space
$$
B_{b}=\bigcup_{\substack{y\in\CovOrbits_{1}(L,L_{1})\cup\{\star\}\\x\in\CovOrbits_{0}(L,L_{1})}}
\M(b,y)\times\M(y,x)\times\M(x,\emptyset)
\bigcup
\{(\gamma_{-},\star),(\gamma_{+},\star)\}
$$ 
of twice broken or degenerate hybrid Morse and Floer trajectories from $b$ to
$\emptyset$.

The ``upper gluing'' map $\sharp^{\bullet}$ is defined as
$$
\sharp^{\bullet}:
\begin{array}{ccc}
B_{b} & \to & B_{b}\\
(\gamma,\beta,\alpha) & \mapsto &(\gamma',\beta',\alpha)
\rlap{ such that $(\gamma,\beta)\glueto(\gamma',\beta')$}\\
(\gamma_{\pm},\star)& \mapsto & (\gamma_{\mp},\star)
\end{array}
$$

The ``lower gluing'' map $\sharp_{\bullet}$ is defined as
$$
\sharp_{\bullet}:
\begin{array}{ccc}
B_{b} & \to & B_{b}\\
(\gamma,\beta,\alpha) & \mapsto &(\gamma,\beta',\alpha')
\rlap{ such that $(\beta,\alpha)\glueto(\beta',\alpha')$}\\
(\gamma_{\pm},\star)& \mapsto & (\gamma_{\pm},\beta_{\star},\alpha_{\star})
\end{array}.
$$

Both $\sharp^{\bullet}$ and $\sharp_{\bullet}$ are one to one maps. We
refer to alternating iteration of $\sharp^{\bullet}$ and
$\sharp_{\bullet}$ as running a ``crocodile walk'' on $B_{b}$.

Since $B_{b}$ is finite, all the orbits of the crocodile walk are
periodic. Notice that the two degenerate configurations
$(\gamma_{-},\star)$ and $(\gamma_{+},\star)$ are in the same orbit.

%
%
%

Let 
$$%
\xymatrix{
(\gamma_{-},\star)\ar^{\sharp_{\bullet}}[r]&
(\gamma_{0}, \beta_{0}, \alpha_{0})\ar^{\sharp^{\bullet}}[r]&
\dots \ar^{\sharp^{\bullet}}[r]&
(\gamma_{N}, \beta_{N}, \alpha_{N})\ar^{\sharp_{\bullet}}[r]&
(\gamma_{+},\star)\ar^{\sharp^{\bullet}}@/^1pc/[llll]
}
$$
be the orbit containing the degenerate configurations.

The lower parts of these broken trajectories form a Floer loop
$$
\psi(b)=\big(\star,(\beta_{0},\alpha_{0}), \dots,
(\beta_{N},\alpha_{N}),\star\big).
$$
Extending this map by concatenation, we get a group morphism
$$
\Gen(f)\xrightarrow{\psi}\Gen(L,L_{1}).
$$
Consider now the following diagram
\begin{equation}
  \label{eq:evalDiagramPsi}
  \xymatrix{
  \Gen(f) \ar[r]^{\eval}\ar[d]^{\psi} 
& \pi_{1}(L,\star)\ar[d]^{\Id}\\
  \Gen(L,L_{1}) \ar[r]^{\eval} 
& \pi_{1}(L,\ast)\\
}.
\end{equation}

\begin{proposition}\label{prop:comutative}
  The diagram \ref{eq:evalDiagramPsi} is commutative, i.e. for all
  $\gamma\in\Gen(f)$, $\gamma$ and $\psi(\gamma)$ are homotopic.
\end{proposition}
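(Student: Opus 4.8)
The plan is to prove the statement one index-$1$ critical point at a time: since both $\eval$ maps and $\psi$ are defined by concatenation over generators of $\Gen(f)$, it suffices to show that for a single $b\in\Crit_1(f)$ the Morse loop around $W^u(b)$ is homotopic in $L$ to the evaluated Floer loop $\eval(\psi(b))$. The geometric idea is that the crocodile-walk orbit through the degenerate configurations $(\gamma_\pm,\star)$ sweeps out a homotopy between the two objects, so the whole argument is really about reading off a based homotopy from the $1$-dimensional moduli spaces that appear along that orbit.

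First I would set up the ``total evaluation'' more carefully. For each broken configuration $(\gamma_i,\beta_i,\alpha_i)$ appearing in the orbit, the Floer piece $\alpha_i\in\M(x_i,\emptyset)$ converges at $+\infty$ to a point $\alpha_i(+\infty)\in L$, and the Morse piece $\gamma_i\in\M(b,y_i)$ emanates from a point $\gamma_i(-\infty)\in W^u(b)$, which I can push down along the flow to the endpoints of $W^u(b)\cup\{\star\}$. The key observation is that each application of $\sharp^{\bullet}$ or $\sharp_{\bullet}$ corresponds to running along a $1$-parameter family: $\sharp^{\bullet}$ moves inside a component of the hybrid space $\M(b,y)\times\M(y,x)$ (glued), keeping the lower Floer end $\alpha$ fixed and hence $\alpha(+\infty)$ constant, while $\sharp_{\bullet}$ is precisely a Floer step, i.e. a component of $\M(y,\emptyset)$, along which the endpoint $\alpha(+\infty)$ traces the path in $L$ that $\eval$ records. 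So along the whole orbit the endpoint-in-$L$ map traces exactly the loop $\eval(\psi(b))$, up to the contributions of the $\sharp^{\bullet}$-segments, which are constant in the endpoint.

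Next I would account for the Morse-theoretic ends. On the $\sharp^{\bullet}$-segments the configuration $(\gamma,\beta)$ runs through a gluing family in $\M(b,y)\sharp\M(y,x)$; the breaking points at the two ends are the two adjacent configurations in the orbit, while at the degenerate ends the family limits to $(\gamma_\pm,\star)$, i.e. to the two Morse flow lines out of $b$. Pushing down by the flow, the $\gamma(-\infty)$-endpoint of such a family stays at one of the two ends of the arc $W^u(b)$ until it reaches a degenerate configuration, where it jumps across $b$ to the other end; these two jumps (there is exactly one orbit containing both $(\gamma_-,\star)$ and $(\gamma_+,\star)$, as noted in the excerpt) assemble to the Morse loop around $W^u(b)$. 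Splicing the ``$L$-endpoint at $+\infty$'' path and the ``$W^u(b)$-endpoint at $-\infty$'' path along the common orbit yields a based loop in $L$ that is literally $\eval(b)$ on the Morse side and literally $\eval(\psi(b))$ on the Floer side, while the full orbit provides a null-homotopy of their concatenation; this gives $\eval(\psi(b))\simeq\eval(b)$ in $\pi_1(L,\star)$. One must check basepoints match, which is exactly why $\alpha_\star(+\infty)=\ast$ was singled out and why $f$ has its unique minimum at $\star$.

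The main obstacle is the bookkeeping at the degenerate configurations and the compatibility of orientations/parametrisations, i.e. making the ``crocodile walk sweeps a homotopy'' intuition into an honest continuous map from a circle's worth of moduli parameter into $\Loops(L,\star)$. Concretely one needs: (i) that consecutive gluing families can be concatenated continuously (the output breaking of one is the input of the next — this is built into the definitions of $\sharp^{\bullet},\sharp_{\bullet}$ being the boundary-identification of $1$-dimensional moduli spaces, guaranteed by the compactness-and-gluing discussion preceding Definition \ref{def: augmentation}); (ii) that the special step $(\star)\glueto(\alpha_\star,\beta_\star)$ in $\M(\star,\emptyset)$ contributes the constant-map end correctly, so that the Floer loop $\psi(b)$ genuinely starts and ends at $\star$ — this uses Proposition \ref{prop: transversality}; and (iii) that no side bubbling interferes, which is exactly the role of the hypotheses ($L$ weakly exact, or monotone with $N_L>\dim L+1$) already invoked for the moduli spaces in play. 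Once these are in place the homotopy is essentially tautological: the $1$-dimensional moduli spaces of the crocodile walk \emph{are} the homotopy.
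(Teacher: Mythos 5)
Your outline captures the paper's central geometric idea exactly — that the crocodile-walk orbit through the degenerate configurations $(\gamma_\pm,\star)$ is a circle's worth of moduli data that manufactures the homotopy, that $\sharp^{\bullet}$-segments keep $\alpha(+\infty)$ fixed, that $\sharp_{\bullet}$-segments trace the Floer step, and that the single extra step $\gamma_+\glueto\gamma_-$ carries the Morse contribution. Your checklist (i)--(iii) is also a sensible accounting of what the argument depends on. So the approach is the paper's approach.

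Where the proposal falls short of a proof is in the construction of the homotopy itself, and this is not just a matter of ``bookkeeping.'' The paper does not try to build a map from the orbit into $\Loops(L,\star)$, which is what you say you are after; it builds a map into $\mathcal{P}(L,b)$, the space of Moore paths \emph{rooted at the index-$1$ critical point $b$}. Concretely: to a configuration $(\gamma,\beta,\alpha)\in B_b$ one assigns the concatenation of the $W^u(b)$-segment from $b$ to $\gamma(-\infty)$, the evaluations of $\gamma,\beta,\alpha$ along $\R\times\{0\}$, all reparametrised as Moore paths using the value of $f$ on the Morse part and the action on the Floer parts so that consecutive pieces match up continuously under gluing. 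Because the assignment lands in the contractible space $\mathcal{P}(L,b)$, composing with the free-endpoint evaluation $\mathcal{P}(L,b)\to L$ immediately exhibits the endpoint loop as nullhomotopic; nothing further needs to be spliced. This single rooted-path device is the missing ingredient that turns ``the $1$-dimensional moduli spaces are the homotopy'' into an actual map, and it replaces your parallel tracking of two endpoints.

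That parallel tracking is also where the proposal says something incorrect. You claim that $\gamma(-\infty)$ ``stays at one of the two ends of the arc $W^u(b)$ until it reaches a degenerate configuration, where it jumps across $b$ to the other end.'' In fact for the configurations $(\gamma_\pm,\star)$ the Morse part is a full flowline from $b$, so $\gamma(-\infty)=b$ there, i.e.\ the \emph{middle} of $W^u(b)$ rather than its $\star$-ends; on the $\sharp^{\bullet}$-segments it then moves continuously in $W^u(b)$ and is constant on the $\sharp_{\bullet}$-segments. There is no ``jump,'' and tracking this point alone does not produce the Morse loop. The Morse loop enters only through the special step $\gamma_+\glueto\gamma_-$, and it is visible as the evaluation of the rooted-path family on that segment, not as the trace of $\gamma(-\infty)$ over the whole orbit. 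With the rooted-path construction in hand, the decomposition of the endpoint loop into (Morse step)$\,\cdot\,\eval(\psi(b))$ and its nullhomotopy follow at once, which is the content of the paper's proof.
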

\begin{proof}
  Let $b$ be an index $1$ Morse critical point and consider the orbit of the crocodile walk introduced above to define $\psi(b)$.

  Consider a trajectory $(u,v)$ in a space of the form
  $\M(b,y)\times\M(y,\emptyset)$ for some
  $y\in\CovOrbits_{1}(L,L_{1})\cup\{\star\}$ (or
  $\M(b,x)\times\M(x,\emptyset)$ for some $x\in\CovOrbits_{0}(L,L_{1})$).

  Evaluation of $u$ and $v$ along $\R\times\{0\}$ turns them into paths
  defined on $\R$. Parameterising their Morse part by the value of $f$ and
  the Floer parts by the (suitable version of) the action turns them into Moore
  paths, and after concatenation, we get a Moore path in $L$ rooted at
  $b$. This defines a map
  $$
  \M(b,y)\times\M(y,\emptyset)\to\mathcal{P}(L,b)
  $$
  where $\mathcal{P}(L,b)$ is the space of Moore paths in $L$ rooted at
  $b$. Notice this map is compatible with the compactification of the
  moduli spaces and extends continuously to their boundary.

  The same holds for spaces of the form $\M(b,x)\times\M(x,\emptyset)$
  for $x\in\CovOrbits_{0}(L,L_{1})$ or the Morse space $\M(b,\emptyset)$, and
  hence, each upper or lower gluing in the crocodile walk defines a
  continuous one-parameter family of paths rooted at $b$, and whose end
  follows the evaluation of the step associated to that gluing. Moreover,
  along the orbit of the crocodile walk, each such family ends where the
  next one starts, so that they globally form a continuous $\S^{1}$
  family of rays, all rooted at $b$ and whose other end describes the
  loop associated to that orbit. 

  This means that the orbit of the crocodile walk defines a trivial loop.
  Notice that one of the steps in this orbit is nothing but
  $\gamma_{+}\glueto\gamma_{-}$, whose evaluation is  the Morse step
  associated to $b$, while the evaluation of the remaining part is
  $\eval(\psi(b))$. As consequence, $\eval\psi(b)$ is homotopic to the
  Morse step associated to $b$.

  Repeating this for each index $1$ Morse critical point $b$, we obtain
  the result.
 \end{proof}

\subsubsection{Proof of theorem \ref{thm:Onto}}

Theorem \ref{thm:Onto} is a straightforward corollary of proposition
\ref{prop:comutative}. Recall $\Gen(f) \xrightarrow{\eval}
\pi_{1}(L,\star)$ is onto. To see this, one can pick a representative
$\gamma$ of an homotopy class $[\gamma]\in\pi_{1}(L,\star)$. Using a
genericity condition, one can suppose that $\gamma$ does not meet any
stable manifold of codimension at most $2$, and meets the stable
manifolds of the index $1$ critical points transversely. Pushing
$\gamma$ down by the gradient flow of the Morse function then defines a
homotopy from $\gamma$ to a Morse loop. 

\begin{remark}
  When $\gamma$ is a Floer loop, this process can be interpreted in terms
  of suitable moduli spaces and gluings, and fits exactly in the same
  formalism as the one used to move a Morse loop into a Floer loop.  
\end{remark}

Since $\Gen(f)\xrightarrow{\eval}\pi_{1}(L,\star)$ is onto and the
diagram \eqref{eq:evalDiagramPsi} commutative,
$\Gen(L,L_{1})\xrightarrow{\eval}\pi_{1}(L,\star)$ has to be onto as well.

\section{Fundamental group of Lagrangians with cylindrical ends}

We now turn to Lagrangian submanifolds with cylindrical ends in
$(\C\times M,\omegastd\oplus\omega)$ as defined in \cite{BiCo}.

Let $W$ be a Lagrangian submanifold in $\C\times M$ with $N_{-}$
negative cylindrical ends $(L^{-}_{1},\dots,L^{-}_{N_{-}})$, and $N_{+}$
positive ones $(L^{+}_{1},\dots,L+_{N_{+}})$~: this means that there
exist some real number $A$ and collections of distinct constants
$a^{\pm}_{1},\dots,a^{\pm}_{N_{\pm}}$, such that 
\begin{gather*}
  E_{\pm}(A):=W\cap\pi^{-1}([\pm A,\pm\infty)\times \R) =
 \bigsqcup_{i=1}^{N_{\pm}}\Big([\pm A,\pm\infty)\times\{a^{\pm}_{i}\}\Big)\times L^{\pm}_{i}
\intertext{and}
W\cap\pi^{-1}([-A,A]) \text{ is compact}.
\end{gather*}

The submanifold $W$ is supposed to satisfy
$\tilde{\omega}(\pi_{2}(\tilde{M},W))=0$ or to be monotone with minimal Maslov number $N_{W}> \text{dim}(W) + 1$.

The previous construction requires some adaptation to make sens in this non compact setting, and following \cite{BiCo}, we consider~:
\begin{itemize}
\item 
 a compact region $B\subset \C$ in the plane, outside which $W$ is cylindrical.
\item
 an almost complex structure $\tilde{J}$ on $\C\times M$, 
 $\tilde\omega$-compatible and such that $\tilde{J}=i\oplus J$ outside
 $B\times M$ for some $\omega$-compatible almost complex structure $J$ on
 $M$. 
\item
 A Hamiltonian function $\tilde{H}:[0,1]\times\C\times M\to\R$ that is
 supported over $B$ and disjoint neighborhoods $V^{\pm}_{i}=\{(x,y)\in\C, \pm x>
  A,|y-a^{\pm}_{i}|<\epsilon \}$ of the ends and is linear in the real coordinate over each cylindrical end~:
 $$
 \forall (x,y,m)\in V^{\pm}_{i}\times M,\quad  \tilde{H}(x,y,m)=\alpha^{\pm}_{i}x+\beta^{\pm}_{i}
 $$
\end{itemize}
 
Unlike the homology, by compactness reasons the construction of the fundamental group does not allow arbitrary perturbations of the ends, and we require
\begin{equation}
  \label{eq:EndSigns}
\begin{gathered}
  \forall i\in\{1,\dots,N_{-}\},\quad \alpha^{-}_{i}<0\\
  \forall i\in\{1,\dots,N_{+}\},\quad \alpha^{+}_{i}>0.
\end{gathered}
\end{equation}

Finally, we pick a point $\star$ in $W$.

\subsection{Moduli spaces and Floer loops}
Let $W_{1}=\phi_{\tilde{H}}(W)$ be the image of $W$ under the Hamiltonian isotopy associated to $\tilde{H}$. The set $\CovOrbits(W,W_{1})$ is defined in the same way as before, and we are interested in the moduli spaces $\M(y,x)$, $\M(x,\emptyset)$, $\M(\star,x)$, $\M(\star,\emptyset)$ introduced in section \ref{sec:ModuliSpaces}.

For a generic choice of the auxiliary data $(\tilde{H},\tilde{J},\star)$ they are smooth manifolds of the expected dimension, but due to the non-compact setting, their compactness require special attention and is discussed in details in \cite[section 5.2]{BiCo}. More precisely, in this work P.~Biran and O.~Cornea use a Morse function $f$ on $W$ with $f=\tilde{H}$ over the ends, and discuss the compactness of hybrid Floer-Morse moduli spaces $\M(x,a)$ or $\M(a,x)$ for $x\in\CovOrbits(W,W_{1})$ and $a\in\Crit(f)$
given as
\begin{align*}
\M(x,a)&=\{u\in\M(x,\emptyset), u(+\infty)\in W^{s}(a)\}\\
\M(a,x)&=\{u\in\M(\emptyset,x), u(-\infty)\in W^{u}(a)\}.
\end{align*}

The moduli spaces $\M(\star,x)$ fit into the description above (picking a Morse function $f$ that has a minimum at $\star$), we are also interested in moduli spaces of the form $\M(x,\emptyset)$ or $\M(\star,\emptyset)$, where the Morse constraint is removed. It turns out that this causes the moduli spaces not to be compact up to breaking any more if the ends are not moved in the right direction, i.e. if \eqref{eq:EndSigns} is not fulfilled. 

If $f$ is ``increasing'' at infinity (i.e. $f(x)$ is increasing with $|x|$ for $|x|$ large enough), so that its flow pushes any point down to a compact set and hence to a critical point. As a consequence the space $\M(x,\emptyset)$ can be seen as the union
$\bigcup_{a\in\Crit(f)}\M(x,a)$ and is hence compact up to breaks (bubbles are not allowed due to the usual asphericity or dimensional arguments).

As consequence, the notions of loops given in
\ref{def:FloerLoop} make sense in this non-compact setting.

\subsection{Homotopies}

We fix a Morse function $f$ (and a Riemannian metric $g$) on $W$ with a single minimum at $\star$, and such that $f=\tilde{H}$ up to a constant over the ends.

The restriction \eqref{eq:EndSigns} ensures that the flow of the Morse function pushes any loop in $W$ down to a compact set and hence to the $1$-skeleton given by $f$~: the homotopy $\phi$ defined in \ref{sec:FloerToMorse} still make sens in this non-compact setting.

Similarly, once all the relevant moduli spaces are proven to be compact up to breaking, the crocodile walk can be run just like in the compact case, and the homotopy $\psi$ defined in \ref{sec:FloerToMorse} is still well defined in this non-compact setting. The diagram \eqref{eq:evalDiagramPsi} still hold with the same commutative up to homotopy property.

Moreover, the required behavior of $f$ over the ends ensures that the Morse loops do indeed generate the fundamental group despite of the non-compactness of $W$. As a consequence, the Floer loops do also generate the fundamental group.

\begin{theorem}
  $\Gen(W,W_{1})\xrightarrow{\eval}\pi_{1}(W)$ is onto.
\end{theorem}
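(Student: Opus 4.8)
The plan is to reduce the non-compact statement to the compact machinery developed in Section 2 exactly as the preceding subsections have been setting up. The key point is that the theorem follows formally from three ingredients, each of which has been justified (or at least sketched) in the text just before the statement: (1) the Morse loops $\Gen(f)\xrightarrow{\eval}\pi_1(W)$ generate the fundamental group, (2) the homotopy $\psi:\Gen(f)\to\Gen(W,W_1)$ is well defined in the cylindrical setting, and (3) the diagram \eqref{eq:evalDiagramPsi} remains commutative up to homotopy. Granting these, the proof is the same two-line argument as in the proof of Theorem \ref{thm:Onto}: given $[\gamma]\in\pi_1(W,\star)$, write it as $\eval(w)$ for some Morse loop $w\in\Gen(f)$ using (1); then by (3) we have $\eval(\psi(w))$ homotopic to $\eval(w)$, so $[\gamma]=\eval(\psi(w))$ lies in the image of $\eval:\Gen(W,W_1)\to\pi_1(W)$; hence $\eval$ is onto.

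So the real content is verifying (1)--(3) in the non-compact case, and I would organize the proof around the three bullet points already flagged in the ``Homotopies'' subsection. First, for (1), the sign condition \eqref{eq:EndSigns} together with the requirement $f=\tilde H$ up to a constant over the ends guarantees that $f$ is increasing with $|x|$ at infinity, so its negative gradient flow is complete and pushes every compact set (in particular the image of a loop) into the region $\pi^{-1}([-A,A])$ where $W$ is compact; after a generic perturbation the loop misses the stable manifolds of index $\le 2$ critical points and, flowing down, becomes a concatenation of Morse steps around index $1$ critical points — this is literally the argument from Section \ref{sec:FloerToMorse} applied inside the compact core. Second, for (2), I would invoke the compactness discussion: once $\M(x,\emptyset)$ and $\M(\star,\emptyset)$ are shown compact up to breaking (which, as the text recalls from \cite[section 5.2]{BiCo}, holds precisely because \eqref{eq:EndSigns} forces $\M(x,\emptyset)=\bigcup_{a\in\Crit f}\M(x,a)$ and asphericity/dimension rules out bubbling), the set $\tilde\Gen(W,W_1)$ of Floer loops is finite, the crocodile walk on each $B_b$ has only periodic orbits, and $\psi(b)$ is extracted exactly as in the compact case. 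Third, for (3), the evaluation-to-Moore-paths construction in the proof of Proposition \ref{prop:comutative} is purely local near the boundary $\R\times\{0\}$ of the strips and makes no use of compactness of $W$; it carries over verbatim, so the crocodile-walk orbit through $(\gamma_\pm,\star)$ again bounds a disc and exhibits $\eval(\psi(b))$ as homotopic to the Morse step at $b$, giving commutativity up to homotopy.

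The main obstacle — and the only place where the cylindrical setting is genuinely different from the compact one — is the compactness of the moduli spaces $\M(x,\emptyset)$ and $\M(\star,\emptyset)$ against escape of energy to infinity along the cylindrical ends. This is exactly why the restriction \eqref{eq:EndSigns} (and not the weaker freedom available for homology) is imposed: a Floer strip with a free end must converge, and the linearity of $\tilde H$ over the ends with the prescribed sign of $\alpha_i^\pm$ forces that end either to stay in the compact core or to be captured by the Morse flow, ruling out a ``broken-at-infinity'' degeneration. I would not reprove this from scratch but cite \cite[section 5.2]{BiCo} and the dimension count $N_W>\dim W+1$ (to exclude the disc-bubble-plus-turned-off-strip configuration on $\M(\star,\emptyset)$, just as in Section 2), noting only where the sign hypothesis enters. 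Once compactness is in hand everything else is a transcription of Section 2, and the theorem follows.
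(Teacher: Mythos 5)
Your proposal matches the paper's proof in structure and content: the paper also reduces the non-compact statement to the compact argument of Section 2, identifying the same three ingredients (Morse loops generate $\pi_1(W)$ via the end condition \eqref{eq:EndSigns} and $f=\tilde H$ at infinity; $\psi$ is defined once the moduli spaces $\M(x,\emptyset)$, $\M(\star,\emptyset)$ are compact up to breaking, citing \cite{BiCo}; the diagram \eqref{eq:evalDiagramPsi} remains commutative). You have in fact fleshed out the argument in somewhat more detail than the paper does, and you correctly isolate the compactness of the free-end moduli spaces as the one genuinely new issue in the cylindrical setting and the reason for imposing \eqref{eq:EndSigns} rather than the weaker perturbation freedom allowed in homology. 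The only small slip is the parenthetical claim that $\tilde\Gen(W,W_1)$ is finite — that semi-group is infinite; what is finite is each $B_b$ (equivalently, the set of Floer steps), which is all the crocodile-walk argument needs — but this does not affect the proof.
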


\section{Cobordisms with two ends}
We now restrict attention to the case when $W$ has exactly one negative end $L_{-}$ and one positive end $L_{+}$ (and still satisfies the topological restriction $\omega(\pi_{2}(\tilde{M},W))=0$ or $N_{W}\geq \text{dim}(W) + 2$).

The goal of this section is to prove Theorem \ref{teo:surjectivity}.
To this end, we use suitable auxiliary data $(\tilde{H},\tilde{J},\star)$ to compare $\pi_{1}(W)$ and $\pi_{1}(L_{-})$.

Namely, a first Hamiltonian $\tilde{H}$ is chosen so that the associated isotopy $(\Phi^{t})$ moves $W\cap\pi^{-1}([-A,A])$ far enough downward (in the $y$-direction) in the $\C$ factor to separate it from itself, while it translates the cylindrical ends vertically, upward for the negative end and downward for the positive one: for $a_{-}, a_{+}, A', K \in \R$ with $a_{-} > 0$ and $a_{+} < 0$ we have $\pi(\Phi^{1}((-\infty, A']\times \{0\}\times L_{-})) = (-\infty, A'\pm K ]\times \{a_{-}\}$ and $\pi(\Phi^{1}([A',\infty)\times \{0\}\times L_{+})) = [A' \pm K,\infty)\times \{a_{+}\}$.

Then all the intersections of $W$ and $W_{1}$ lie above a single point $p\in\C$, that belongs to the area where the projection $\pi:\C\times M\to\C$ is holomorphic and where $\pi(W)$ and $\pi(W_{1})$ are two transverse curves. These curves divide a neighborhood of $p$ in $4$ regions~: call the ones whose natural boundary orientation goes from $W$ to $W_{1}$ ``arrival regions'' and the other ``departure regions''. The isotopy $(\Phi^{t})$ can be chosen to be a product $(\phi^{t}_{\C},\phi^{t}_{M})$ near the fiber $\pi^{-1}(p)$, where $\phi^{t}_{\C}$ leaves $p$ fixed, and $\phi^{t}_{M}$ moves $L_{-}$ to some Lagrangian submanifold $L_{1}$ transverse to $L_{-}$.

In particular, the projection $\pi(\bigcup_{0\leq t\leq 1}\Phi^{t}(W))$ can be supposed to fill the arrival regions and not to touch the departure ones, and the components of $\C\setminus\pi(\bigcup_{0\leq   t\leq 1}\Phi^{t}(W))$ that contain a departure region to be unbounded.

Recall from \cite{BiCo} that 
$$
\forall y\in\CovOrbits_{1}(W,W_{1}),
\forall x\in\CovOrbits_{0}(W,W_{1}),
\forall u\in\M(y,x),\quad
\pi(u(\R\times[0,1]))=\{p\}.
$$

The same kind of arguments apply to augmentations and co-augmentations
through $\star$ if $\pi(\star)=p$, and we have the two following lemmas~:
\begin{lemma}
For all $x$ in $\CovOrbits_{0}(W,W_{1})$, we have
$$
\forall u \in \M(x,\emptyset):\quad \pi(u(\R\times[0,1]))=\{p\}.
$$
\end{lemma}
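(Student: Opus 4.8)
The plan is to reduce the statement to the corresponding fact for genuine Floer strips $u\in\M(y,x)$, which is quoted from \cite{BiCo} just above, by exploiting the fact that an augmentation $u\in\M(x,\emptyset)$ is obtained from such strips by gluing. First I would recall that elements of $\M(x,\emptyset)$ are solutions of the augmentation equation $(F_2)$, i.e.\ of the Cauchy--Riemann equation with the boundary condition moved by $\chi_2=\beta$, so that $u$ is everywhere $\tilde J$-holomorphic (this is precisely the point of the ``moving boundary conditions'' viewpoint emphasized in the remark after Proposition \ref{prop: transversality}: augmentations are genuinely holomorphic, not merely Floer solutions). In particular, since $\tilde J=i\oplus J$ and $\tilde H$ is supported over $B$ away from the departure regions, the composition $\pi\circ u:\R\times[0,1]\to\C$ is holomorphic wherever $u$ leaves the support region, and more to the point $\pi\circ u$ satisfies an appropriate (possibly perturbed near $p$) holomorphic curve equation on all of $\R\times[0,1]$, because near $\pi^{-1}(p)$ the isotopy $\Phi^t$ was chosen to be a product $(\phi^t_\C,\phi^t_M)$ with $\phi^t_\C$ fixing $p$.

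Next I would set up the boundary and asymptotic constraints on the plane curve $v:=\pi\circ u$. Its two boundary arcs $v(\R\times\{0\})$ and $v(\R\times\{1\})$ lie on $\pi(W)$ and $\pi(W_1)=\phi^1_\C(\pi(W))$ respectively; as $s\to-\infty$ the end converges to the intersection point $x$, hence $v(s,t)\to p$; and as $s\to+\infty$, because $\chi_2(s)=\beta(s)=0$ for $s\geq 0$, the boundary condition becomes $v(s,1)\in\pi(W)$ as well and the end converges to a point of $\pi(W)$ (the evaluation point $u(+\infty)\in W$). The finite-energy hypothesis $E(u)<\infty$ gives $\int_{\R\times[0,1]} v^*\omegastd<\infty$. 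The key geometric input, already recorded in the excerpt, is that $\pi\big(\bigcup_{0\leq t\leq 1}\Phi^t(W)\big)$ fills the arrival regions, avoids the departure regions, and the components of the complement of this set that meet a departure region are unbounded; in particular a neighbourhood of $p$ meets $\pi(W)\cup\pi(W_1)$ only along the two transverse arcs through $p$, bounding four quadrants, two ``arrival'' and two ``departure''.

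The heart of the argument is then an open mapping / positivity-of-intersection confinement for $v$. I would argue that $v$ cannot escape the point $p$: by the open mapping theorem a non-constant holomorphic $v$ is an open map, so its image is open; its boundary arcs are pinned to $\pi(W)$ and $\pi(W_1)$, which near $p$ only bound the four quadrants; and the asymptotics force $v$ to converge to $p$ at the $-\infty$ end and, by the same ``moving boundary turned off'' mechanism together with the area bound, to a point of $\pi(W)$ at the $+\infty$ end. Combining this with the fact that the departure quadrants lie in unbounded complementary components (so a confined finite-area curve entering them would have to wrap around, contradicting the area bound, or contradict the boundary conditions) forces $\mathrm{Image}(v)$ to be contained in the closed arrival region; but an open holomorphic curve whose boundary lies entirely in the two arcs through $p$ and which converges to $p$ must, by the standard argument that the local intersection number of $v$ with the arcs is non-negative and is forced to vanish, be constant equal to $p$. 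Hence $\pi(u(\R\times[0,1]))=\{p\}$. The main obstacle I expect is making the ``$v$ cannot leave the arrival region'' step fully rigorous: one must combine the open mapping theorem, the precise location of the arcs $\pi(W),\pi(W_1)$ near and away from $p$, the unboundedness of the relevant complementary components, and the finite-energy bound into a clean confinement statement — essentially a maximum-principle argument in the plane — and to handle carefully the corner behaviour of $v$ where the two boundary arcs meet at $p$. Once confinement is established, the conclusion $v\equiv p$ is routine.
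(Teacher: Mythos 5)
Your proposal follows essentially the same route as the paper's (very terse) proof: $\pi\circ u$ is holomorphic, the open mapping theorem makes a non-constant $\pi\circ u$ sweep an open set when leaving $p$, and the boundary/asymptotic constraints plus the unboundedness of the complementary components containing the departure regions give the contradiction. The one small difference is logical organization: the paper's single sentence hinges on ``for orientation reasons'' a non-constant $\pi\circ u$ \emph{must} enter a departure region as it leaves $p$ (since the $-\infty$ end is where the strip departs from $x$), which immediately forces the image into an unbounded component; your version instead first rules out the departure quadrants and then appends a separate positivity-of-intersection argument to handle the arrival side, a detour the orientation observation makes unnecessary, though the geometric content is the same.
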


\begin{proof}
  For orientation reasons, if $\pi(u(\R\times[0,1]))$ is not constant, by the open mapping theorem the strip $\pi\circ u$
  has to meet one of the unbounded regions of
  $\C\setminus(\pi(u(\R\times\{0\}))\cup\pi(u(\R\times\{1\})))$ when leaving $p$, which is not possible. 
\end{proof}

\begin{lemma}
Suppose the point $\star$ is chosen in $\pi^{-1}(p)$, then for all $x$ in
$\CovOrbits_{0}(W,W_{1})$ and $u$ in $\M(\star,x)$, we have
$$
u(\R\times[0,1])\subset\pi^{-1}(p).
$$
\end{lemma}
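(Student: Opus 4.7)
The plan is to mirror the proof of the previous lemma, the new ingredient being that the $-\infty$ asymptote $\star$ is now assumed to project to $p$, so both ends of $u$ project to the same point. I would set up the auxiliary data so that $\tilde{J}=i\oplus J$ in a neighborhood of $\pi^{-1}(p)$ (consistent with the product structure $(\Phi^{t})=(\phi^{t}_{\C},\phi^{t}_{M})$ of the isotopy near that fiber); the composition $v:=\pi\circ u:\R\times[0,1]\to\C$ is then holomorphic wherever $u$ lies in that neighborhood. The boundary conditions read $v(s,0)\in\pi(W)$ for all $s$, and $v(s,1)\in\pi(\Phi^{\chi_3(s)}(W))$, where $\chi_3(s)=\beta(-s)$ vanishes for $s\leq 0$ and equals $1$ for $s\geq 1$. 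Both asymptotic limits of $v$ equal $p$: the one at $+\infty$ because $x\in W\cap W_1$, and the one at $-\infty$ because $\pi(\star)=p$ by hypothesis.

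Next I would suppose, for a contradiction, that $v$ is non-constant, and argue as in the previous lemma. Near the $+\infty$ end the local configuration of $\pi(W)$ and $\pi(W_1)$ at $p$ is identical to the one examined there, so the open mapping theorem combined with the orientation of $\partial(\R\times[0,1])$ forces the image of $v$ to enter one of the four regions cut out by these transverse curves at $p$, and specifically a departure region. But departure regions are contained in unbounded components of $\C\setminus\pi(\bigcup_{0\leq t\leq 1}\Phi^{t}(W))$, contradicting the confinement of the strip in the compact part of the plane where $\tilde{H}$ is supported (which follows from the compactness analysis of \cite[Section~5.2]{BiCo} already used to define $\M(\star,x)$). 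Hence $v\equiv p$, which gives $u(\R\times[0,1])\subset\pi^{-1}(p)$.

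The main obstacle is to check that the orientation argument at $+\infty$ is not derailed by the behaviour at $-\infty$, where both boundary components of $v$ lie on the single curve $\pi(W)$ and the ``four regions'' picture locally degenerates to a ``two half-planes'' picture. I would handle this by a Schwarz reflection across $\pi(W)$: in a neighborhood of $-\infty$, after a holomorphic chart straightening $\pi(W)$ near $p$, the map $v$ extends to a holomorphic map defined on a full disk sending an interior point to $p$, with no topological obstruction. This shows the $-\infty$ end is topologically harmless and cannot cancel the obstruction produced by the $+\infty$ end, so that the previous argument goes through essentially unchanged.
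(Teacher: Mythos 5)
Your overall strategy (project to $\C$, exploit holomorphicity, appeal to the open mapping theorem and boundedness) is the right one, but the way you localize the contradiction is backward compared to the paper's proof, and the step where you claim the $+\infty$ end argument is ``identical'' to the previous lemma's is where the gap lies.

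In the previous lemma ($\M(x,\emptyset)$) the intersection point $x$ sits at the $-\infty$ end of the strip; here ($\M(\star,x)$) it sits at $+\infty$. Passing to a disc model, the $-\infty$ end parametrizes the upper half-disc while the $+\infty$ end parametrizes the lower half-disc; since $\pi\circ u$ is holomorphic and hence orientation-preserving, the cyclic order in which its image near $p$ meets $\pi(W)$ and $\pi(W_1)$ is \emph{reversed} between the two situations. So the argument at the $x$-end does not transport verbatim: the sector you land in has the opposite orientation type (arrival vs.\ departure). This is precisely why the paper opens its proof with ``the proof is slightly different here since the strip $\pi\circ u$ now ends at $p$ instead of starting from it'' and then switches to the other end.

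The paper's contradiction comes from the $-\infty$ end, exactly the one you dismiss as ``harmless''. There both boundary arcs lie on $W$, so both project to $\pi(W)$; conformally the end is a boundary point $-1$ of the closed disc with $v(-1)=p$ and a full boundary neighborhood of $-1$ mapping into $\pi(W)$. The Schwarz reflection you invoke is then the crux, not a neutral observation: if $v$ is non-constant, after reflection it is open, so the image of a neighborhood of $-1$ covers at least one full side of $\pi(W)$ near $p$. Each side of $\pi(W)$ near $p$ contains a departure region (since $\pi(W_1)$ crosses $\pi(W)$ transversely at $p$), and departure regions lie in unbounded components of $\C\setminus\pi\bigl(\bigcup_{0\le t\le1}\Phi^{t}(W)\bigr)$ --- contradiction with the boundedness of the strip's projection. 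In short: your orientation bookkeeping at $+\infty$ is off, and your reading of the Schwarz reflection at $-\infty$ has the conclusion inverted; fixing the second point alone gives the paper's proof.
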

\begin{proof}
  The proof is slightly different here since the strip $\pi\circ u$ now
  ends at $p$ instead of starting from it.

  More precisely, it still starts at $\pi(\star)=p$, but the boundary conditions on the $-\infty$ end are now given by $L_{-}$ on both sides.

  After a suitable conformal transformation sending the strip to
  $D=\{z\in\C,|z|\leq1\}\setminus\{-1,1\}$, $u$ defines a map $\tilde{u}$
  on the closed unit disc $\bar{D}$ with $\tilde{u}(-1)=\star$ and
  sending a neighborhood of $-1$ in $\partial \bar{D}$ to $L_{-}$. In
  particular, the image of this neighborhood has to meet at least one of  the departure regions. Since these regions are unbounded, this is a contradiction.
\end{proof}

\begin{remark}[Genericity]
  The condition $\pi(\star)=p$ is obviously not a generic one. However, it still offers enough freedom to ensure transversality, i.e. that all the relevant moduli spaces are cut out transversely. In fact, we just proved that all the trajectories we are interested are contained in the single fiber $M=\pi^{-1}(p)$. A generic choice of $\star$ (and all the   other auxiliary data) on $L_{-}$ ensures transversality for the relevant moduli spaces of curves in $M$. To ensure transversality for the same curves seen in $\C\times M$, we need to check that the linearized operators $\bar{\partial}_{u}$ (with its associated boundary conditions) at each curve $u$ seen in $\C\times M$ are onto.

But for the chosen structure, $\bar\partial_{u}$ splits as a direct sum $\bar{\partial}\oplus\bar{\partial}_{u_{M}}$ where the first $\bar{\partial}$ is associated to the constant curve $\{p\}$ and boundary conditions associated to $T_{p}\pi(W)$ and $T_{p}\pi(W_{1})$. Then the linearized operator associated to $\bar{\partial}$ is also surjective by the same argument that in Proposition \ref{prop: transversality} (its Kernel is real constant maps and then its $Index$ equal to the dimension of its Kernel). By assumption the linearization of $\bar{\partial}_{u_{M}}$ is surjective, therefore the operator corresponding to the linearization of $\bar{\partial}\oplus\bar{\partial}_{u_{M}}$ is surjective.

\end{remark}

\subsubsection{Proof of Theorem \ref{teo:surjectivity}} 

Let $L_{1}=\phi_{M}^{1}(L_{-})$. From the previous section, we derive that Floer steps in $(M,L_{-},L_{1})$ are the same as the Floer steps in $(\C\times M,
W,W_{1})$. Moreover, loops that are homotopically trivial in $L_{-}$ are
obviously also trivial in $W$, so that the identity
$\Gen(L_{-},L_{1})\xrightarrow{\Id}\Gen(W,W_{1})$ at the generator level induces a
map $\pi_{1}(L_{-},L_{1})\xrightarrow{i}\pi_{1}(W,W_{1})$ at the fundamental groups
level, which is obviously onto.

Moreover, since the identity commutes with the evaluation map, the
following diagram, where $i_{\sharp}$ is the map induced by the inclusion,
commutes~:
$$
\xymatrix{
\pi_{1}(L_{-},L_{1})\ar@{->>}[r]^{i}\ar[d]_{\eval}^{\sim} & \pi_{1}(W,W_{1})\ar[d]_{\eval}^{\sim}\\
\pi_{1}(L_{-},\ast)\ar[r]^{i_{\sharp}} & \pi_{1}(W,\ast)\\
}.
$$
This ends the proof that $i_{\sharp}$ is onto for $(W; L_{-}, L_{+})$ with $N_{W}> dim(W) +1$. 


\subsubsection{Case $\pi_{2}(M,L_{-})=0$}
Suppose moreover that $\pi_{2}(M,L_{-})=0$. From the homotopy long exact sequence of the pair $(M,L_{-})$ we derive that the inclusion of $L_{-}$ in $M$ induces an injective group morphism
$\pi_{1}(L_{-})\hookrightarrow\pi_{1}(M)$. This also holds for the
inclusion of $L_{-}$ in $\C\times M$, and since it factors
through the inclusion of $L_{-}$ in $W$, we conclude that the induced
map $\pi_{1}(L_{-})\xrightarrow{i_{\sharp}}\pi_{1}(W)$ has to be injective.

This ends the proof of corollary \ref{cor:IsoIfPi2ML=0}.

\subsubsection{Proof of Theorem \ref{theo:HcovForWeakExactOrMonotone=0} }
To prove this theorem we use a version of Floer homology called quantum homology, defined in \cite{BiCo1} for a Lagrangian cobordism $(W;L,L')$. Denote by $QH(W,L;\Z[\pi_{1}(W)])$ the homology of the complex $\mathcal{C}(f, \rho, J;Z[\pi_{1}(W)])= (\mathcal{C}_{*}(f, \rho, J), d_{*})$. This complex is the $\Z$-graded \footnote{To define the complex with $\Z$-coefficients, it is enough the Lagrangian to be spin. We use the conventions for orientations in \cite{BiCo3}, that in particular imply that the orientations for Morse complex and pearl complex coincide.} free $\Z[\pi_{1}(W)]$-chain complex defined for a triple $\mathcal{D}=(f, \rho, J)$ composed by a Morse function $f : \hat{W} \rightarrow \R$ a Riemannian metric $\rho$ on $W$ and an almost complex structure $J$. The data is assumed to be generic. We assume that the gradient vector field $\nabla_{\rho}f$ is transverse to $\partial \hat{W}$, and it  points outside along $L$ and it points inside along $L'$. The function $f$ is extended linearly to $W$. The ring $\Lambda = \Z[t^{-1}, t]$ is graded by setting $\text{deg}(t) = -N_{W}$, then  $\Lambda= \bigoplus \limits_{i\in \Z}\Lambda_{i}$, where $\Lambda_{i}$ is the subring of homogeneous elements of degree $i$. Denote by $\text{Crit}_{i}(f)$ the set of index $i$ critical points of $f$, then $$\mathcal{C}_{*}(f, \rho, J):= \bigoplus\limits_{i \in \Z} \Z[\pi_{1}(W)]\otimes \Z \langle \text{Crit}_{* + iN_{W}}(f)\rangle \otimes \Lambda_{-iN_{W}},$$ the differential $d:\mathcal{C}_{*}(f, \rho, J)\rightarrow \mathcal{C}_{*-1}(f, \rho, J)$ counts configurations in the moduli space $\mathcal{P}(x, y, A; \mathcal{D})$, where $x, y \in \text{Crit}(f)$ and $A\in \pi_{2}(\tilde{M}, W)$. An element of  $\mathcal{P}(x, y, A; \mathcal{D})$ is a pearl trajectory, this is a configuration of flow lines joining $J$-holomorphic disks. 
An element of $\mathcal{P}(x, y, A; \mathcal{D})$ can be written by an array of disks $(u_{1}, \ldots, u_{k})$ where each $u_{i}$ is a $J$-holomorphic disk such that $u_{1}(-1) \in W^{u}_{x}(f)$ the unstable submanifold of $f$ at $x$, for $0 \leq i \leq k$ the disk $u_{i}(D^{2})$ is connected to $u_{i+1}(D^{2})$ by a flow line from $u_{i}(1)$ to $u_{i+1}(-1)$, $u_{k}(1)\in W^{s}_{y}(f)$ (the stable submanifold of $f$ at $y$) and $\mu(u_{1}) + \cdots + \mu(u_{k}) = \mu(A)= lN_{W}$. 
The dimension of $\mathcal{P}(x, y, A; \mathcal{D})$ is given by $|x|-|y| + \mu(A)- 1$. For a detailed definition and for a proof of the regularity of these spaces see \cite{BiCo0}. 

For each $z \in \text{Crit}(f)$, denote by $\gamma_{z}$ a fixed oriented path connecting $z$ to a fixed base point $\star \in W$, we denote by $\bar{\gamma_{z}}$ the path $\gamma_{z}$ with reversed orientation.
For $\overline{u}=(u_{1}, \ldots, u_{k}) \in \mathcal{P}(x, y, A; \mathcal{D})$ denote by $g_{\overline{u}}$ the homotopy class of the loop obtained by concatenation of $\bar{\gamma_{x}}\# \partial_{-}\overline{u} \# \gamma_{y}$, where $\partial_{-}\overline{u}$  is the path from $x$ to $y$ obtained by following the flow lines and half boundaries of the disks $u_{i}(D^{2})$ given by $u_{i}(e^{i\theta})$ with $\pi \leq \theta \leq 2\pi$. 

$$d(x) := \sum \limits_{\substack{y\in \text{Crit}(f)\\
|x|-|y| = 1}} \sum \limits_{u\in \mathcal{P}(x, y, 0; \mathcal{D})} g_{u}y  +\sum \limits_{\substack{A \in \pi_{2}(\tilde{M},W), y\in \text{Crit}(f)\\
|x|-|y| + \mu(A) = 1}}\sum \limits_{u\in \mathcal{P}(x, y, A; \mathcal{D})} (-1)^{|y|}g_{u}y  t^{\mu(A)}.$$

The differential satisfies $d^{2} = 0$, this is a straightforward adaptation of the proof given in \cite{BiCo3}.

By grading reasons and the assumption $N_{W}> \text{dim}(W) + 1$, for $0 \leq * \leq \text{dim}(W)$ we have that $\mathcal{C}_{*}(f,\rho, J)= \Z[\pi_{1}(W)]\otimes\Z\langle\text{Crit}_{*}(f)\rangle$. For these dimensions the space $\mathcal{P}(x, y, A; \mathcal{D}) = \mathcal{P}(x, y, 0; \mathcal{D})$ consists of flow trajectories connecting $x$ to $y$:
by assumption $N_{W} > \text{dim} W$ +1, so if $\mu(A) \neq 0$ then $\mu(A) > |x|-|y| - 1$ and then $\mathcal{P}(x, y, A; \mathcal{D})= \emptyset$.
This means that $d_{*}= d_{Morse}$ is the Morse differential for $0 \leq * \leq \text{dim} W$.
So the complex $\mathcal{C}(f, \rho, J;Z[\pi_{1}(W)])$ contains a copy of the $\Z[\pi_{1}(W)]$-Morse complex of the pair $(W,L)$, let us denote it by $CM(W,L;\Z[\pi_{1}(W)])$.
\begin{align}
\ldots \rightarrow C_{\text{dim}(W)+1}(f, \rho, J) &\xrightarrow{d_{\text{dim}(W)+1}} C_{\text{dim}(W)}(f, \tilde{\rho}, J) \rightarrow \ldots  \rightarrow C_{0}(f, \rho, J)\xrightarrow{d_{0}}\ldots\\
\ldots \rightarrow C_{\text{dim}(W)+1}(f, \rho, J) &\xrightarrow{d_{\text{dim}(W)+1}} CM(W,L;\Z[\pi_{1}(W)]) \xrightarrow{d_{0}}C_{-1}(f, \rho, J)\rightarrow \ldots 
\end{align}
Recall that the $\Z[\pi_{1}(W)]$-Morse complex of the pair $(W,L)$ is the complex associated to a Morse function on $W$ with differential defined in the same way as $d$ before but taking into account only the spaces $\mathcal{P}(x, y,A; \mathcal{D})$ with $A = 0$. By lifting the pair $(f, \rho)$ to $\tilde{W}$, we can define a cellular decomposition of the pair $(\tilde{W},p^{-1}(L))$, so $H(CM(W,L;\Z[\pi_{1}(W)]))\cong H(\tilde{W}, p^{-1}(L); \Z)$. For $0 \leq * \leq \text{dim}(W)$ the subcomplex $(\mathcal{C}_{*}(f, \rho, J), d_{*})$ is precisely the  $\Z[\pi_{1}(W)]$-Morse complex of the pair $(W,L)$.
For $N_{W}> \text{dim}(W) +1$ differential $d_{\text{dim}(W)+1}$ and $d_{0}$ are both $0$ (since the moduli spaces are empty by dimension reason). This implies that $CM(W,L;\Z[\pi_{1}(W)]) \hookrightarrow \mathcal{C}(f, \rho, J;\Z[\pi_{1}(W)])$ induces an injective map $H(CM(W,L;\Z[\pi_{1}(W)])) \to QH(W,L,\Z[\pi_{1}(W)])$. On the other hand, we know that $QH(W,L,\Z[\pi_{1}(W)]) = 0$ since the cobordism is displaceable and there exist a PSS isomorphism between the Floer homology with local coefficients and the quantum homology with local coefficients \cite{Za}. From this we obtain $H_{*}(CM(W,L;\Z[\pi_{1}(W)])) = H_{*}(\tilde{W}, p^{-1}(L); \Z) = 0$. By hypothesis $L\hookrightarrow W$ induces an injective map on the fundamental group, since we previously have proved that $\pi_{1}(L)\rightarrow \pi_{1}(W)$ is surjective, then we have the $\pi_{1}(L)\rightarrow \pi_{1}(W)$ is an isomorphism, so $p^{-1}(L) = \tilde{L}$. From the long exact sequence in homology of the pair $(\tilde{W},\tilde{L} )$ and one of the Whitehead Theorems we obtain that the inclusion $L\hookrightarrow W$ is a homotopy equivalence. We proceed in the analogous way to show that $L'\hookrightarrow W$ is a homotopy equivalence.
\begin{remark}Alternatively, notice that from the proof above we have $H_{*}(\tilde{W}, p^{-1}(L); \Z) = 0$, so in particular $H_{0}(p^{-1}(L); \Z) \cong H_{0}(\tilde{W}; \Z) \cong \Z$. As pointed out to us by Baptiste Chantraine (see Remark 2) this implies that $\pi_{1}(L)\rightarrow \pi_{1}(W)$ is surjective.
\end{remark}

%

\bibliographystyle{alpha}
\bibliography{bibliography}
\end{document}